\titleformat{\section}{\large\bf\boldmath}{\arabic{section}.}{2ex}{}
\titlespacing*{\section}{0ex}{2ex}{1ex}
\titleformat{\subsection}{\bf\boldmath}{\arabic{section}.\arabic{subsection}.}{2ex}{}
\titlespacing*{\subsection}{0ex}{0.8ex}{0ex}
\newtheorem{thmstar}{Theorem}
\newtheorem{theorem}{Theorem}
\newtheorem{lemma}[theorem]{Lemma}
\newtheorem{corollary}[theorem]{Corollary}
\theoremstyle{definition}
\newtheorem{remark}[theorem]{Remark}
\newcommand{\recht}{\rightarrow}
\newcommand{\om}{\omega}
\newcommand{\vphi}{\varphi}
\newcommand{\cF}{\mathcal{F}}
\newcommand{\cC}{\mathcal{C}}
\newcommand{\cI}{\mathcal{I}}
\newcommand{\cG}{\mathcal{G}}
\newcommand{\rL}{\mathord{\text{\rm L}}}
\newcommand{\Om}{\Omega}
\newcommand{\N}{\mathbb{N}}
\newcommand{\cU}{\mathcal{U}}
\newcommand{\cJ}{\mathcal{J}}
\newcommand{\id}{\mathord{\text{\rm id}}}
\newcommand{\Z}{\mathbb{Z}}
\newcommand{\actson}{\curvearrowright}
\newcommand{\al}{\alpha}
\newcommand{\R}{\mathbb{R}}
\newcommand{\eps}{\varepsilon}
\newcommand{\F}{\mathbb{F}}
\newcommand{\si}{\sigma}
\newcommand{\cB}{\mathcal{B}}
\newcommand{\cV}{\mathcal{V}}
\newcommand{\Xtil}{\widetilde{X}}
\newcommand{\Ytil}{\widetilde{Y}}
\newcommand{\yb}{\overline{y}}
\newcommand{\xb}{\overline{x}}
\newcommand{\rhob}{\overline{\rho}}
\newcommand{\Atil}{\tilde{A}}
\newcommand{\Btil}{\tilde{B}}
\newcommand{\Stab}{\operatorname{Stab}}
\begin{document}
\begin{center}
{\LARGE\bf\boldmath Stable orbit equivalence of Bernoulli actions of free\vspace{0.3ex}\\ groups and isomorphism of some of their factor actions}

\vspace{1ex}

by Niels Meesschaert, Sven Raum and Stefaan Vaes\renewcommand{\thefootnote}{}\footnotetext{\noindent Department of Mathematics, K.U.Leuven.\\ Work partially
    supported by ERC Starting Grant VNALG-200749, Research
    Programme G.0639.11 of the Research Foundation --
    Flanders (FWO) and K.U.Leuven BOF research grant OT/08/032.}
\end{center}

\begin{abstract}\noindent
We give an elementary proof for Lewis Bowen's theorem saying that two Bernoulli actions of two free groups, each having arbitrary base probability spaces, are stably orbit equivalent. Our methods also show that for all compact groups $K$ and every free product $\Gamma$ of infinite amenable groups,
the factor $\Gamma \actson K^{\Gamma}/K$ of the Bernoulli action $\Gamma \actson K^{\Gamma}$ by the diagonal $K$-action, is isomorphic with a Bernoulli action of $\Gamma$.
\end{abstract}

Free, ergodic and probability measure preserving (p.m.p.) actions $\Gamma \actson (X,\mu)$ of countable groups give rise to II$_1$ factors $\rL^\infty(X) \rtimes \Gamma$ through the group measure space construction of Murray and von Neumann. It was shown in \cite{Si55} that the isomorphism class of the II$_1$ factor $\rL^\infty(X) \rtimes \Gamma$ only depends on the orbit equivalence relation on $(X,\mu)$ given by $\Gamma \actson (X,\mu)$. This led Dye in \cite{Dy58} to a systematic study of group actions up to orbit equivalence, where he proved the fundamental result that all free ergodic p.m.p.\ actions of $\Z$ are orbit equivalent. Note that two such actions need not be isomorphic (using entropy, spectral measure, etc). In \cite{OW79} Ornstein and Weiss showed that actually all orbit equivalence relations of all free ergodic p.m.p.\ actions of infinite amenable groups are isomorphic with the unique ergodic hyperfinite equivalence relation of type II$_1$.

The nonamenable case is far more complex and many striking rigidity results have been established over the last 20 years, leading to classes of group actions for which the orbit equivalence relation entirely determines the group and its action. We refer to \cite{Sh05,Fu09,Ga10} for a comprehensive overview of measured group theory. On the other hand there have so far only been relatively few orbit equivalence ``flexibility'' results for nonamenable groups. Two results of this kind have been obtained recently by Lewis Bowen in \cite{Bo09a,Bo09b}. In \cite{Bo09a} Bowen proved that two Bernoulli actions $\F_n \actson X_0^{\F_n}$ and $\F_n \actson X_1^{\F_n}$ of the same free group $\F_n$, but with different base probability spaces, are always orbit equivalent. Note that this is a nontrivial result because Bowen proved earlier in \cite{Bo08} that these Bernoulli actions can only be isomorphic if the base probability spaces $(X_0,\mu_0)$ and $(X_1,\mu_1)$ have the same entropy.

Two free ergodic p.m.p.\ actions $\Gamma_i \actson (X_i,\mu_i)$ are called stably orbit equivalent if their orbit equivalence relations can be restricted to non-negligible measurable subsets $\cU_i \subset X_i$ such that the resulting equivalence relations on $\cU_0$ and $\cU_1$ become isomorphic. The number $\mu_1(\cU_1)/\mu_0(\cU_0)$ is called the compression constant of the stable orbit equivalence. In \cite{Bo09b} Bowen proved that the Bernoulli actions $\F_n \actson X_0^{\F_n}$ and $\F_m \actson X_1^{\F_m}$ of two different free groups are stably orbit equivalent with compression constant $(n-1)/(m-1)$.

The first aim of this article is to give an elementary proof for the above two theorems of Bowen. The concrete stable orbit equivalence that we obtain between $\F_n \actson X_0^{\F_n}$ and $\F_m \actson X_1^{\F_m}$ is identical to the one discovered by Bowen. The difference between the two approaches is however the following: rather than writing an explicit formula for the stable orbit equivalence, we construct actions of $\F_n$ and $\F_m$ on (subsets of) the same space, having the same orbits and satisfying an abstract characterization of the Bernoulli action.

Secondly our simpler methods also yield a new orbit equivalence flexibility (actually isomorphism) result that we explain now. Combining the work of many hands \cite{GP03,Io06,GL07} it was shown in \cite{Ep07} that every nonamenable group admits uncountably many non orbit equivalent actions (see \cite{Ho11} for a survey). Nevertheless it is still an open problem to give a concrete construction producing such an uncountable family. For a while it has been speculated that for any given nonamenable group $\Gamma$ the actions
\begin{equation}\label{eq.family}
\Bigl\{\Gamma \actson K^\Gamma / K \; \Big| \; \text{$K$ a compact second countable group acting by diagonal translation on $K^\Gamma$}\; \Bigr\}
\end{equation}
are non orbit equivalent for nonisomorphic $K$. Indeed, in \cite[Proposition 5.6]{PV06} it was shown that this is indeed the case whenever every $1$-cocycle for the Bernoulli action $\Gamma \actson K^\Gamma$ with values in either a countable or a compact group $\cG$ is cohomologous to a group homomorphism from $\Gamma$ to $\cG$. By Popa's cocycle superrigidity theorems \cite{Po05,Po06}, this is the case when $\Gamma$ contains an infinite normal subgroup with the relative property (T) or when $\Gamma$ can be written as the direct product of an infinite group and a nonamenable group. Conjecturally the same is true whenever the first $\ell^2$-Betti number of $\Gamma$ vanishes (cf.\ \cite{PS09}).

In the last section of this paper we disprove the above speculation whenever $\Gamma = \Lambda_1 * \cdots * \Lambda_n$ is the free product of $n$ infinite amenable groups, in particular when $\Gamma = \F_n$. We prove that for these $\Gamma$ and for every compact second countable group $K$ the action $\Gamma \actson K^\Gamma/K$ is isomorphic with a Bernoulli action of $\Gamma$. As we shall see, the special case $\Gamma = \F_n$ is a very easy generalization of \cite[Appendix C.(b)]{OW86} where the same result is proven for $K = \Z / 2 \Z$ and $\Gamma = \F_2$.

More generally, denote by $\cG$ the class of countably infinite groups $\Gamma$ for which the action $\Gamma \actson K^{\Gamma}/K$ is isomorphic with a Bernoulli action of $\Gamma$. Then by \cite{OW86} the class $\cG$ contains all infinite amenable groups. We prove in Theorem \ref{thm.stability} that $\cG$ is stable under taking free products. By the results cited above, $\cG$ does not contain groups that admit an infinite normal subgroup with the relative property (T) and $\cG$ does not contain groups that can be written as the direct product of an infinite group and a nonamenable group. So it is a very intriguing problem which groups belong to $\cG$.

\subsection*{Terminology and notations}

A measure preserving action $\Gamma \actson (X,\mu)$ of a countable group $\Gamma$ on a standard probability space $(X,\mu)$ is called \emph{essentially free} if a.e.\ $x \in X$ has a trivial stabilizer and is called \emph{ergodic} if the only $\Gamma$-invariant measurable subsets of $X$ have measure $0$ or $1$. Two free ergodic probability measure preserving (p.m.p.) actions $\Gamma \actson (X,\mu)$ and $\Lambda \actson (Y,\eta)$ are called
\begin{itemize}
\item \emph{conjugate,} if there exists an isomorphism of groups $\delta : \Gamma \recht \Lambda$ and an isomorphism of probability spaces $\Delta : X \recht Y$ such that $\Delta(g \cdot x) = \delta(g) \cdot \Delta(x)$ for all $g \in \Gamma$ and a.e.\ $x \in X$;
\item \emph{orbit equivalent,} if there exists an isomorphism of probability spaces $\Delta : X \recht Y$ such that $\Delta( \Gamma \cdot x) = \Lambda \cdot \Delta(x)$ for a.e.\ $x \in X$;
\item \emph{stably orbit equivalent,} if there exists a nonsingular isomorphism $\Delta : \cU \recht \cV$ between non-negligible measurable subsets $\cU \subset X$ and $\cV \subset Y$ such that $\Delta(\Gamma \cdot x \cap \cU) = \Lambda \cdot \Delta(x) \cap \cV$ for a.e.\ $x \in \cU$. Such a $\Delta$ automatically scales the measure by the constant $\eta(\cV)/\mu(\cU)$, called the \emph{compression constant} of the stable orbit equivalence.
\end{itemize}

We say that two p.m.p.\ actions $\Gamma \actson (X_i,\mu_i)$ of the same group are \emph{isomorphic} if they are conjugate w.r.t.\ the identity isomorphism $\id : \Gamma \recht \Gamma$, i.e.\ if there exists an isomorphism of probability spaces $\Delta : X_0 \recht X_1$ such that $\Delta(g \cdot x) = g \cdot \Delta(x)$ for all $g \in \Gamma$ and a.e.\ $x \in X_0$.

Recall that for every countable group $\Gamma$ and standard probability space $(X_0,\mu_0)$, the Bernoulli action of $\Gamma$ with base space $(X_0,\mu_0)$ is the action $\Gamma \actson X_0^\Gamma$ on the infinite product $X_0^\Gamma$ equipped with the product probability measure, given by $(g \cdot x)_h = x_{hg}$ for all $g,h \in \Gamma$ and $x \in X_0^\Gamma$. If $\Gamma$ is an infinite group and $(X_0,\mu_0)$ is not reduced to a single atom of mass $1$, then $\Gamma \actson X_0^\Gamma$ is essentially free and ergodic.

\subsection*{Statement of the main results}

We first give an elementary proof for the following theorem of Lewis Bowen.

\begin{thmstar}[Bowen \cite{Bo09a,Bo09b}] \label{thm.A}
For fixed $n$ and varying base probability space $(X_0,\mu_0)$ the Bernoulli actions $\F_n \actson X_0^{\F_n}$ are orbit equivalent.

If also $n$ varies, the Bernoulli actions $\F_n \actson X_0^{\F_n}$ and $\F_m \actson Y_0^{\F_m}$ are stably orbit equivalent with compression constant $(n-1)/(m-1)$.
\end{thmstar}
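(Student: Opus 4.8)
The plan is to prove Theorem~\ref{thm.A} by constructing explicit actions of $\F_n$ and $\F_m$ on (pieces of) a common probability space that share orbits and are each recognizable as Bernoulli actions. The starting observation is that $\F_n$ is the fundamental group of a graph, and a Bernoulli action $\F_n \actson X_0^{\F_n}$ can be reassembled from the action of a groupoid built on such a graph. Concretely, I would first treat the ``equal rank'' statement. Fix two base spaces $(X_0,\mu_0)$ and $(Y_0,\eta_0)$; by splitting atoms one may reduce to the case where $(X_0,\mu_0)$ is standard non-atomic, so it suffices to show each Bernoulli action of $\F_n$ is orbit equivalent to a single model action, e.g.\ the one with base $([0,1],\mathrm{Leb})$. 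The mechanism: realize $\F_n \actson X_0^{\F_n}$ as induced from a p.m.p.\ equivalence relation coming from $n$ partial isomorphisms of the base ``building block'' space, in such a way that the orbit equivalence relation does not remember $X_0$, only that the building blocks were glued along a graph with Euler characteristic $1-n$.

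The key technical device I would isolate is an \emph{abstract characterization of Bernoulli actions}: an essentially free p.m.p.\ action $\F_n \actson (X,\mu)$ is conjugate to a Bernoulli action as soon as one can exhibit a measurable partition (or generating independent family) indexed compatibly with the standard generating set of $\F_n$, witnessing the required independence and equidistribution. With such a criterion in hand, the proof of Theorem~\ref{thm.A} becomes: (1) describe a groupoid/graph-of-spaces model $\cR$ on a measured space $\cU$ whose full orbit equivalence relation is the same whether one uses base space $(X_0,\mu_0)$ or $(Y_0,\eta_0)$; (2) show that inducing $\cR$ up along a spanning tree yields, on the nose, a $\F_n$-action satisfying the abstract Bernoulli characterization with base $X_0$ (and likewise for $Y_0$); (3) conclude orbit equivalence for equal $n$, and for varying $n$ versus $m$ observe that the two induced actions live on restrictions $\cU_n, \cU_m$ of the common space whose measures are in ratio governed by the ranks. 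The compression constant $(n-1)/(m-1)$ then falls out because the fundamental domain for the $\F_n$-action inside the common groupoid space has measure proportional to $|1-n|$ times the measure of the building block, so $\eta(\cV)/\mu(\cU) = (n-1)/(m-1)$.

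For the compression constant it is cleanest to phrase things in terms of the \emph{cost}/Euler-characteristic bookkeeping of the model: arrange a single p.m.p.\ groupoid $\cG$ on a probability space $(Z,\zeta)$ together with two families of ``bisections'' realizing $\F_n$ and $\F_m$ respectively as generating the same orbit equivalence relation on suitable subsets $Z_n$ and $Z_m$; normalize so that the common base block has measure $1$ and $Z_n$ is, up to null sets, the union of $n-1$ translates (minus overlaps) giving $\zeta(Z_n)=$ const $\cdot (n-1)$. The restriction of the orbit equivalence relation to $Z_n$ is then orbit equivalent to the $\F_n$-Bernoulli action by step~(2), and the sought stable orbit equivalence $\Delta : Z_n \recht Z_m$ is just the identity on $Z$ restricted appropriately, which scales the measure by $(n-1)/(m-1)$ as required by the definition recalled above.

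The hard part, which I expect to dominate the write-up, is step~(2): proving that the action obtained by inducing the groupoid model along a spanning tree genuinely satisfies the abstract Bernoulli characterization — i.e.\ that the coordinates one reads off are truly independent and each distributed as $(X_0,\mu_0)$, uniformly over the $\F_n$-orbit. This requires choosing the graph and the partial isomorphisms with enough care that the ``new'' coordinates created when crossing the non-tree edges of the graph are probabilistically independent of everything seen so far, which is really a statement that the tree structure lets one inductively build the product measure $\mu_0^{\F_n}$ one generator at a time. Everything else — the reduction to non-atomic base, the measure-scaling computation, and the passage from $\F_n$ alone to $\F_n$ versus $\F_m$ — is bookkeeping once this core independence lemma is established. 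A secondary (but routine) point is checking that the model action is essentially free and ergodic so that ``orbit equivalent'' is meaningful in the sense defined above, which follows from infiniteness of $\F_n$ and non-triviality of the base.
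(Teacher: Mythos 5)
Your overall philosophy --- build actions of $\F_n$ and $\F_m$ on pieces of a common space with identical orbits and recognize each via an abstract characterization of Bernoulli actions --- is exactly the paper's stated strategy, and you correctly identify that the independence verification is where the work lies. But as written the proposal has two genuine gaps, and the first is fatal as stated. You claim that ``by splitting atoms one may reduce to the case where $(X_0,\mu_0)$ is standard non-atomic.'' Splitting an atom changes the base entropy and hence (by Bowen's own $f$-invariant) the isomorphism class of the Bernoulli action; there is no a priori reason it preserves the orbit equivalence class --- that is precisely the content of the first half of the theorem, so the reduction is circular. More broadly, your proposal contains no input whatsoever from the amenable world, whereas every known proof of this theorem (Bowen's and the paper's) leans on Dye's theorem that all free ergodic p.m.p.\ actions of $\Z$ are orbit equivalent. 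The paper uses it three times: to get the fixed-$n$, varying-base statement (via the fact that orbit equivalence is preserved under co-induction from $\Z$ to $\F_n = \F_{n-1} * \Z$, which is its Theorem \ref{thm.coinduced}); to absorb an auxiliary twisted action into a Bernoulli action (Lemma \ref{lem.een}); and, inside the key stable orbit equivalence (Lemma \ref{lem.twee}), to produce a Bernoulli $\Z$-action on a slice $V_0$ of measure $1/\kappa$ whose orbits are exactly $\Z \cdot z \cap V_0$. Your ``the orbit equivalence relation does not remember $X_0$'' is the conclusion, not a feature one can build into a graph-of-spaces model for free.

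The second gap is that your step (2) --- the inductive independence argument showing the coordinates read off along the non-tree generators are i.i.d.\ with law $\mu_0$ --- is acknowledged but not carried out, and no concrete choice of graph, groupoid, or partial isomorphisms is supplied against which it could even be checked. In the paper this step occupies the entire proof of Lemma \ref{lem.twee}: one starts from the twisted shift $\F_2 \actson X_0^{\langle b\rangle\backslash \F_2}$ with $X_0 = \Z/\kappa\Z$, restricts to the cylinder $W_0$ of measure $1/\kappa$, defines an $\F_{1+\kappa}$-action via explicit cocycles $\om,\om'$, proves $\om'(\om(g,x),x)=g$ and $|\om(g,x)|=|g|$, and then runs a double induction on word length using Lemma \ref{lem.indep} to establish independence; a further Lemma \ref{lem.drie} is needed to upgrade the stable orbit equivalence of $\F_2 \actson X$ to one of $\F_2 \actson X \times Y_0^{\F_2}$, which is what Lemma \ref{lem.een} identifies with an honest Bernoulli action. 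Finally, note that the Euler-characteristic/cost bookkeeping you invoke only \emph{constrains} the compression constant to be $(n-1)/(m-1)$ (cost of a restricted relation); it does not produce the stable orbit equivalence itself. So the proposal is a reasonable high-level plan but not a proof: the two essential ingredients --- an appeal to Dye's theorem and the explicit cocycle/independence construction --- are missing.
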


Next we study factors of Bernoulli actions and prove the following result.

\begin{thmstar}\label{thm.B}
If $\Gamma = \Lambda_1 * \cdots * \Lambda_n$ is the free product of $n$ infinite amenable groups and if $K$ is a nontrivial second countable compact group equipped with its normalized Haar measure, then the factor action $\Gamma \actson K^{\Gamma}/K$ of the Bernoulli action $\Gamma \actson K^\Gamma$ by the diagonal translation action of $K$ is isomorphic with a Bernoulli action of $\Gamma$. In particular, keeping $n$ fixed and varying the $\Lambda_i$ and $K$, all the actions $\Gamma \actson K^\Gamma/K$ are orbit equivalent.

In the particular case where $\Gamma = \F_n$, the action $\Gamma \actson K^{\Gamma}/K$ is isomorphic with the Bernoulli action $\Gamma \actson (K \times \cdots \times K)^\Gamma$ whose base space is an $n$-fold direct product of copies of $K$.
\end{thmstar}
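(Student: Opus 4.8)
The plan is to produce, for the free product $\Gamma = \Lambda_1 * \cdots * \Lambda_n$ of infinite amenable groups and a nontrivial compact second countable group $K$, an explicit model of the factor action $\Gamma \actson K^\Gamma/K$ and then recognise this model as a Bernoulli action of $\Gamma$ by verifying an abstract characterisation of Bernoulli actions. First I would describe $K^\Gamma/K$ concretely: the diagonal translation $K \actson K^\Gamma$ acts by $(k\cdot x)_g = kx_g$, so the quotient is naturally identified with the space of ``relative configurations''. A convenient choice of section is to fix a normal form in $\Gamma$ and record, for each $g\in\Gamma$ and each generator-step $s$ leading from $g$ to its neighbour, the ratio $x_g^{-1}x_{gs}$; equivalently, one fixes the Cayley graph of $\Gamma$ with respect to the generating set $\bigcup_i (\Lambda_i\setminus\{e\})$ and realises $K^\Gamma/K$ as a space of $K$-labellings of the \emph{edges} of this graph (with the obvious cocycle/orientation convention), carrying the pushforward of the Haar product measure. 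Under this identification the $\Gamma$-action is the natural shift on edge-labellings.

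The next step is the combinatorial heart of the argument. Because $\Gamma$ is a free product, its Cayley graph (with respect to $\bigcup_i(\Lambda_i\setminus\{e\})$) is a ``tree of complete graphs on the cosets $g\Lambda_i$'', and choosing a spanning tree — or rather exploiting that each $\Lambda_i$ is infinite amenable — one wants to re-coordinatise the edge-labellings so that they become an honest product over $\Gamma$ of some base space. The key point is that for an infinite amenable group $\Lambda_i$, the factor $\Lambda_i \actson K^{\Lambda_i}/K$ is already isomorphic to a Bernoulli action of $\Lambda_i$ by \cite{OW86} (indeed, $\cG$ contains all infinite amenable groups), so locally on each ``block'' $g\Lambda_i$ the relative configurations can be recoded as a product. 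Globally, the tree structure of the free product lets these local recodings be assembled, with one coordinate of the product placed at each element of $\Gamma$ (and, in the $\F_n$ case, $n$ coordinates — one per free generator — yielding base space $K^n$). I would phrase the conclusion via an abstract characterisation: a free p.m.p.\ action of $\Gamma$ is isomorphic to a Bernoulli action with base $(Y_0,\nu_0)$ iff there is a generating family of $\Gamma$-translates of a single factor $\sigma$-algebra $\cB_0 \subseteq \cB$ that is independent along the $\Gamma$-orbit and with $(\cB_0,\text{restricted measure}) \cong (Y_0,\nu_0)$; the recoding above exhibits exactly such a $\cB_0$.

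The main obstacle is the globalisation step: making the local Ornstein--Weiss recodings on the blocks $g\Lambda_i$ fit together coherently into a single $\Gamma$-equivariant isomorphism. The subtlety is that a recoding of $\Lambda_i\actson K^{\Lambda_i}/K$ as a Bernoulli action is not canonical, and different blocks share vertices of $\Gamma$, so one must organise the choices along the tree of blocks — picking a root/base point in $\Gamma$, orienting the tree away from it, and recoding each block relative to its ``entry vertex'' — so that every group element of $\Gamma$ receives exactly one (resp.\ $n$) coordinate and the shift becomes the Bernoulli shift. I expect that once the bookkeeping of the tree-of-blocks is set up, the measure-independence of the new coordinates follows from independence of disjoint blocks in the product measure on $K^\Gamma$ together with the independence already supplied by \cite{OW86} inside each block; the compression-constant/orbit-equivalence consequences then follow from Theorem \ref{thm.A} applied to the resulting Bernoulli actions, since any two Bernoulli actions of the same $\F_n$ are orbit equivalent. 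For the explicit $\F_n$ statement, one checks directly that the tree-of-blocks is just the (rooted) Cayley tree of $\F_n$, each ``block'' is a single edge carrying one $K$-label, and the $n$ edges leaving each vertex give precisely the base space $K\times\cdots\times K$.
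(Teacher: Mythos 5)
Your plan is correct in outline, and for the $\F_n$ case it is literally the paper's argument: the map $\xb \mapsto (x_g^{-1}x_{a_ig})_{i,g}$, with independence checked sphere by sphere in the Cayley tree. For the general free product, however, you organise the proof differently from the paper. You propose a direct, simultaneous decomposition of $K^\Gamma/K$ into the relative configurations of all the ``blocks'' $\Lambda_i g$, recode each block by the Ornstein--Weiss isomorphism $\Lambda_i \actson K^{\Lambda_i}/K \cong \Lambda_i \actson Y_i^{\Lambda_i}$, and reassemble. The paper instead proceeds by a two-factor induction (Theorem \ref{thm.stability}): it first proves a general $K$-equivariant statement that orbit equivalence and conjugacy of the quotients $\Lambda_i \actson X_i/K$ are preserved under co-induction to $\Gamma * \Lambda_i$ (Theorem \ref{thm.coinduced}, which is needed anyway for Theorem \ref{thm.A}), and separately identifies the remaining factor $(K^{\Gamma_0\backslash G})/K$ with a co-induced action (Lemma \ref{lem.factor}); iterating gives the $n$-fold free product. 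Your route avoids the heavy co-induction machinery for Theorem \ref{thm.B} at the cost of having to prove, in one go, that the block configurations for all $i$ and all cosets are mutually independent and jointly generating; the paper's route modularises exactly this verification into the word-length inductions inside Theorem \ref{thm.coinduced} and Lemma \ref{lem.factor}, both resting on the same independence lemma (Lemma \ref{lem.indep}) that your sphere-by-sphere argument would also need.

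Two points in your ``globalisation'' step deserve care, since they are where the paper expends most of its effort. First, the mutual independence of the recoded coordinates is not merely ``independence of disjoint blocks'': distinct blocks share vertices, so one must condition along a tree order (equivalently, induct on the spheres $\cC(n)$ and invoke Lemma \ref{lem.indep}); this is an argument, not an observation. Second, recoding each block ``relative to its entry vertex'' is the wrong normalisation: the entry vertices are not preserved by the $\Gamma$-action, so that choice destroys equivariance. The correct statement is that the family of block configurations of type $i$, with the $\Gamma$-action permuting cosets, \emph{is} the co-induced action of $\Lambda_i \actson K^{\Lambda_i}/K$ to $\Gamma$ (the content of Lemma \ref{lem.factor}); applying the $\Lambda_i$-equivariant Ornstein--Weiss isomorphism and using that the co-induction of a Bernoulli action is Bernoulli (Remark \ref{rem.bete}.1) then yields $\Gamma \actson Y_i^\Gamma$ without any base-point choices. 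With these two points made precise, your plan closes.
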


\subsection*{Acknowledgment}

We are extremely grateful to Lewis Bowen for his remarks on the first versions of this article. Initially we only proved that the factors of Bernoulli actions of $\F_n$ in Theorem \ref{thm.B} are orbit equivalent with a Bernoulli action of $\F_n$. Lewis Bowen remarked that it was unknown whether these actions are actually isomorphic to Bernoulli actions. Triggered by this remark we proved the above version of Theorem \ref{thm.B}.

\section{Preliminaries}\label{sec.prelim}

Let $(X,\mu)$ and $(Y,\eta)$ be standard probability spaces. We call $\Delta$ a \emph{probability space isomorphism} between $(X,\mu)$ and $(Y,\eta)$ if $\Delta$ is a measure preserving Borel bijection between conegligible subsets of $X$ and $Y$. We call $\Delta$ a \emph{nonsingular isomorphism} if $\Delta$ is a null set preserving Borel bijection between conegligible subsets of $X$ and $Y$.

Given a sequence of standard probability spaces $(X_n,\mu_n)$, we consider the \emph{infinite product} $X = \prod_n X_n$ equipped with the infinite product measure $\mu$. Then, $(X,\mu)$ is a standard probability space. The coordinate maps $\pi_n : X \recht X_n$ are measure preserving and independent. Moreover, the Borel $\sigma$-algebra on $X$ is the smallest $\sigma$-algebra such that all $\pi_n$ are measurable.

Conversely, assume that $(Y,\eta)$ is a standard probability space and that $\theta_n : Y \recht X_n$ is a sequence of Borel maps. Then, the following two statements are equivalent.
\begin{enumerate}
\item There exists an isomorphism of probability spaces $\Delta : Y \recht X$ such that $\pi_n(\Delta(y)) = \theta_n(y)$ for a.e.\ $y \in Y$.
\item The maps $\theta_n$ are measure preserving and independent, and the $\sigma$-algebra on $Y$ generated by the maps $\theta_n$ equals the entire Borel $\sigma$-algebra of $Y$ up to null sets.
\end{enumerate}
The proof of this equivalence is standard: if the $\theta_n$ satisfy the conditions in 2, one defines $\Delta(y)_n := \theta_n(y)$.

Assume that $\Gamma \actson (X,\mu)$ and $\Lambda \actson (Y,\eta)$ are essentially free ergodic p.m.p.\ actions. Assume that $\Delta : X \recht Y$ is an orbit equivalence. By essential freeness, we obtain the a.e.\ well defined Borel map $\om : \Gamma \times X \recht \Lambda$ determined by
$$\Delta(g \cdot x) = \om(g,x) \cdot \Delta(x) \quad\text{for all}\;\; g \in \Gamma \;\;\text{and a.e.}\;\; x \in X \; .$$
Then, $\om$ is a \emph{$1$-cocycle} for the action $\Gamma \actson (X,\mu)$ with values in the group $\Lambda$. In general, whenever $\cG$ is a Polish group and $\Gamma \actson (X,\mu)$ is a p.m.p.\ action, we call a Borel map $\om : \Gamma \times X \recht \cG$ a $1$-cocycle if $\om$ satisfies
$$\om(gh,x) = \om(g,h \cdot x) \, \om(h,x) \quad\text{for all}\;\; g,h \in \Gamma \;\;\text{and a.e.}\;\; x \in X \; .$$
Two $1$-cocycles $\om,\om' : \Gamma \times X \recht \cG$ are called \emph{cohomologous} if there exists a Borel map $\vphi : X \recht \cG$ such that
$$\om'(g,x) = \vphi(g \cdot x) \, \om(g,x) \, \vphi(x)^{-1} \quad\text{for all}\;\; g \in \Gamma \;\;\text{and a.e.}\;\; x \in X \; .$$

Also a \emph{stable orbit equivalence} gives rise to a $1$-cocycle, as follows. So assume that $\Gamma \actson (X,\mu)$ and $\Lambda \actson (Y,\eta)$ are essentially free ergodic p.m.p.\ actions and that $\Delta : \cU \recht \cV$ is a nonsingular isomorphism between the nonnegligible subsets $\cU \subset X$ and $\cV \subset Y$, such that $\Delta(\cU \cap \Gamma \cdot x) = \cV \cap \Lambda \cdot \Delta(x)$ for a.e.\ $x \in \cU$. To define the Zimmer $1$-cocycle $\om : \Gamma \times X \recht \Lambda$, one first uses the ergodicity of $\Gamma \actson (X,\mu)$ to choose a Borel map $p : X \recht \cU$ satisfying $p(x) \in \Gamma \cdot x$ for a.e.\ $x \in X$. Then, $\om : \Gamma \times X \recht \Lambda$ is uniquely defined such that
$$\Delta(p(g \cdot x)) = \om(g,x) \cdot \Delta(p(x)) \quad\text{for all}\;\; g \in \Gamma \;\;\text{and a.e.}\;\; x \in X \; .$$
One checks easily that $\om$ is a $1$-cocycle and that, up to cohomology, $\om$ does not depend on the choice of $p : X \recht \cU$.

In this article, we often use $1$-cocycles for p.m.p.\ actions $\Gamma \actson (X,\mu)$ of a free product group $\Gamma = \Gamma_1 * \Gamma_2$. Given $1$-cocycles $\om_i : \Gamma_i \times X \recht \cG$, one checks easily that there is a unique $1$-cocycle $\om : \Gamma \times X \recht \cG$, up to equality a.e., satisfying $\om(g,x) = \om_i(g,x)$ for all $g \in \Gamma_i$ and a.e.\ $x \in X$.

\section{Orbit equivalence of co-induced actions}

Let $\Lambda \actson (X,\mu)$ be a p.m.p.\ action. Assume that $\Lambda < G$ is a subgroup. The co-induced action of $\Lambda \actson X$ to $G$ is defined as follows. Choose a map $r : G \recht \Lambda$ such that $r(\lambda g) = \lambda r(g)$ for all $g \in G,\lambda \in \Lambda$ and such that $r(e) = e$. Note that the choice of such a map $r$ is equivalent to the choice of a section $\theta : \Lambda \backslash \Gamma \recht \Gamma$ satisfying $\theta(\Lambda e) = e$. Indeed, the formula $g = r(g) \, \theta(\Lambda g)$ provides the correspondence between $\theta$ and $r$.

Once we have chosen $r : G \recht \Lambda$, we can define a $1$-cocycle $\Om : \Lambda \backslash G \times G \recht \Lambda$ for the right action of $G$ on $\Lambda \backslash G$, given by $\Omega(\Lambda k,g) = r(k)^{-1} r(kg)$ for all $g,k \in G$.

Classically, whenever $\om : G \times X \recht \Lambda$ is a $1$-cocycle for an action of $G$ on $X$, we can induce an action $\Lambda \actson Y$ to an action $G \actson X \times Y$ given by $g \cdot (x,y) = (g \cdot x, \om(g,x) \cdot y)$.

The co-induced action is defined by a similar formula. So assume that $\Lambda \actson (X,\mu)$ is a p.m.p.\ action and that $\Lambda < G$ is a subgroup. Choose $r : G \recht \Lambda$ with the associated $1$-cocycle $\Om : \Lambda \backslash G \times G \recht \Lambda$, as above. Then the formula
$$G \actson X^{\Lambda \backslash G} \quad\text{where}\quad (g \cdot y)_{\Lambda k} = \Omega(\Lambda k,g) \cdot y_{\Lambda kg}$$
yields a well defined action of $G$ on the product probability space $X^{\Lambda \backslash G}$. It is easy to check that $G \actson X^{\Lambda \backslash G}$ is a p.m.p.\ action and that $(\lambda \cdot y)_{\Lambda e} = \lambda \cdot y_{\Lambda e}$ for all $\lambda \in \Lambda$ and $y \in X^{\Lambda \backslash G}$. A different choice of $r : G \recht \Lambda$ leads to a cohomologous $1$-cocycle $\Om$ and hence an isomorphic action.

Given a subgroup $\Lambda < G$, a subset $I \subset G$ is called a \emph{right transversal} of $\Lambda < G$ if $I \cap \Lambda g$ is a singleton for every $g \in G$.

Up to isomorphism the co-induced action can be characterized as the unique p.m.p.\ action $G \actson Y$ for which there exists a measure preserving map $\rho : Y \recht X$ with the following properties.
\begin{enumerate}
\item $\rho(\lambda \cdot y) = \lambda \cdot \rho(y)$ for all $\lambda \in \Lambda$ and a.e.\ $y \in Y$.
\item The factor maps $y \mapsto \rho(g \cdot y)$, $g \in G$, generate the Borel $\sigma$-algebra on $Y$, up to null sets.
\item If $I \subset G$ is a right transversal of $\Lambda < G$, then the maps $y \mapsto \rho(g \cdot y)$, $g \in I$, are independent.
\end{enumerate}
To prove this characterization, first observe that the co-induced action satisfies properties 1, 2 and 3 in a canonical way, with $\rho(y) = y_{\Lambda e}$. Conversely assume that $G \actson Y$ satisfies these properties. Fix a right transversal $I \subset G$ for $\Lambda < G$, with $e \in I$. Combining properties 1 and 2, we see that the factor maps $y \mapsto \rho(g \cdot y)$, $g \in I$, generate the Borel $\sigma$-algebra on $Y$, up to null sets. A combination of property 3 and the characterization of product probability spaces in Section \ref{sec.prelim} then provides the isomorphism of probability spaces $\Delta : Y \recht X^{\Lambda \backslash G}$ given by $\Delta(y)_{\Lambda g} = \rho(g \cdot y)$ for all $y \in Y$, $g \in I$. The right transversal $I \subset G$ for $\Lambda < G$ allows to uniquely define the map $r : G \recht \Lambda$ such that $r(\lambda g) = \lambda$ for all $\lambda \in \Lambda$ and $g \in I$. This choice of $r$ provides a formula for the co-induced action $G \actson X^{\Lambda \backslash G}$. It is easy to check that $\Delta(g \cdot y) = g \cdot \Delta(y)$ for all $g \in G$ and a.e.\ $y \in Y$.

\begin{remark}\label{rem.bete}\mbox{}\\
{\bf 1.} The above characterization of the co-induced action yields the following result that we use throughout the article: the co-induction of the Bernoulli action $\Lambda \actson (X_0,\mu_0)^\Lambda$ is isomorphic with the Bernoulli action $G \actson (X_0,\mu_0)^G$. Indeed, the Bernoulli action $G \actson (X_0,\mu_0)^G$, together with the canonical factor map $X_0^G \recht X_0^\Lambda$, satisfies the above characterization of the co-induced action.

{\bf 2.} In certain cases, for instance if $G = \Gamma * \Lambda$, there exists a group homomorphism $\pi : G \recht \Lambda$ satisfying $\pi(\lambda) = \lambda$ for all $\lambda \in \Lambda$. Then $r : G \recht \Lambda$ can be taken equal to $\pi$ and the co-induced action $G \actson X^{\Lambda \backslash G}$ is of the form $(g \cdot y)_{\Lambda k} = \pi(g) \cdot y_{\Lambda k g}$ for all $g,k \in G$ and $y \in X^{\Lambda \backslash G}$.

{\bf 3.} We often make use of diagonal actions: if $\Lambda \actson (X,\mu)$ and $\Lambda \actson (Y,\eta)$ are p.m.p.\ actions, we consider the diagonal action $\Lambda \actson X \times Y$ given by $\lambda \cdot (x,y) = (\lambda \cdot x,\lambda \cdot y)$. We make the following simple observation: if $\Lambda < G$ and if we denote by $G \actson \Xtil$, resp.\ $G \actson \Ytil$, the co-induced actions of $\Lambda \actson X$, resp.\ $\Lambda \actson Y$, to $G$, then the co-induced action of the diagonal action $\Lambda \actson X \times Y$ to $G$ is precisely the diagonal action $G \actson \Xtil \times \Ytil$.

{\bf 4.} Assume that $\Lambda \actson (X,\mu)$ is a p.m.p.\ action and that $\Lambda < G$ is a subgroup. Denote by $G \actson Y$ the co-induced action and by $\rho : Y \recht X$ the canonical $\Lambda$-equivariant factor map. Whenever $\Delta_0 : X \recht X$ is a p.m.p.\ automorphism that commutes with the $\Lambda$-action, there is a unique p.m.p.\ automorphism $\Delta : Y \recht Y$, up to equality a.e., that commutes with the $G$-action and such that $\rho(\Delta(y)) = \Delta_0(\rho(y))$ for a.e.\ $y \in Y$. Writing $Y = X^{\Lambda \backslash \Gamma}$, the automorphism $\Delta$ is just the diagonal product of copies of $\Delta_0$. Later we use this easy observation to canonically lift a p.m.p.\ action $K \actson (X,\mu)$ of a compact group $K$, commuting with the $\Lambda$-action, to a p.m.p.\ action $K \actson Y$ that commutes with the $G$-action. Moreover, $\rho$ becomes $(\Lambda \times K)$-equivariant. Writing $Y = X^{\Lambda \backslash \Gamma}$, the action $K \actson Y$ is the diagonal $K$-action.
\end{remark}

We prove that orbit equivalence is preserved under co-induction to a free product. We actually show that the preservation is ``$K$-equivariant'' in a precise way that will be needed in the proof of Theorem \ref{thm.B}. The case where $K = \{e\}$, i.e.\ co-induction from $\Lambda$ to $\Gamma * \Lambda$, is due to Lewis Bowen \cite{Bo09a}. Recall that similarly as in the case of countable groups, a p.m.p.\ action $G \actson (X,\mu)$ of a second countable locally compact group $G$ is called essentially free if a.e.\ $x \in X$ has a trivial stabilizer (cf.\ Lemma \ref{lem.compact-free} in the appendix).

\begin{theorem}\label{thm.coinduced}
Let $\Lambda_0,\Lambda_1$ and $\Gamma$ be countable groups and $K$ a compact second countable group. Assume that $\Lambda_i \times K \actson (X_i,\mu_i)$ are essentially free p.m.p.\ actions. Denote $G_i := \Gamma * \Lambda_i$ and denote by $G_i \actson Y_i$ the co-induced action of $\Lambda_i \actson X_i$ to $G_i$, together with the natural actions $K \actson Y_i$ that commute with $G_i \actson Y_i$ (see Remark \ref{rem.bete}.4).
\begin{itemize}
\item If the actions $\Lambda_i \actson X_i/K$ are orbit equivalent, then the actions $G_i \actson Y_i/K$ are orbit equivalent.
\item If the actions $\Lambda_i \actson X_i/K$ are conjugate w.r.t.\ the group isomorphism $\delta : \Lambda_0 \recht \Lambda_1$, then the actions $G_i \actson Y_i/K$ are conjugate w.r.t.\ the group isomorphism $\id * \delta : G_0 \recht G_1$.
\end{itemize}
\end{theorem}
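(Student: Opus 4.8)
The plan is to transport an orbit equivalence upward through co-induction by building the co-induced data on both sides simultaneously and checking that the characterization of the co-induced action (properties 1, 2, 3 above) survives after quotienting by $K$. Concretely, suppose $\Theta : X_0/K \recht X_1/K$ is an orbit equivalence for $\Lambda_0 \actson X_0/K$ and $\Lambda_1 \actson X_1/K$ (in the conjugacy case, one additionally knows $\Theta$ intertwines the actions via $\delta$). Let $\om : \Lambda_0 \times X_0/K \recht \Lambda_1$ be the associated Zimmer $1$-cocycle, so $\Theta(g \cdot \xb) = \om(g,\xb) \cdot \Theta(\xb)$ for $g \in \Lambda_0$ and a.e.\ $\xb \in X_0/K$. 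Since $G_0 = \Gamma * \Lambda_0$, there is a group homomorphism $\pi : G_0 \recht \Lambda_0$ with $\pi|_{\Lambda_0} = \id$ and $\pi|_\Gamma$ trivial; by Remark \ref{rem.bete}.2 we may realize $Y_i$ as $X_i^{\Lambda_i \backslash G_i}$ with $(g \cdot y)_{\Lambda_i k} = \pi_i(g) \cdot y_{\Lambda_i k g}$, and the $K$-action on $Y_i$ is the diagonal one (Remark \ref{rem.bete}.4). Thus $Y_i/K$ is a sub-quotient we want to compare across $i = 0,1$.

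The key step is to produce, on the $G_0$-side, a single action of the larger group $G_0$ that matches the orbits of the co-induced action on the $G_1$-side after a measure space identification. I would do this cocycle-theoretically: the Zimmer cocycle $\om$ of the orbit equivalence $\Theta$, together with the constant-in-$X$ identity cocycle for $\Gamma$ (i.e.\ using that $\Gamma$ sits the same way in $G_0$ and $G_1$ via $\id$), assemble by the free-product gluing remarked at the end of Section \ref{sec.prelim} into a $1$-cocycle $\omtil : G_0 \times (X_0/K) \recht G_1$. One then forms the skew-product / co-induced action of $G_0$ on the right space and must verify two things: (a) the resulting $G_0$-action on (a model of) $Y_1/K$ has the same orbits as the genuine co-induced action $G_1 \actson Y_1/K$ — this is exactly what it means for $\omtil$ to implement an orbit equivalence — and (b) that the abstract characterization (properties 1–3) is met, so that the $G_0$-action so constructed is conjugate (via $\id * \delta$, in the rigid case) to the co-induced action $G_0 \actson Y_0/K$. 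Checking (b) amounts to verifying the independence of the transversal factor maps and the generation of the $\sigma$-algebra downstairs in $X_i/K$; here the only subtlety is that passing to the $K$-quotient could in principle destroy independence, but because $K$ acts diagonally on $Y_i = X_i^{\Lambda_i \backslash G_i}$ and one quotients by the single diagonal copy of $K$, the transversal-indexed coordinates remain jointly distributed as they were, with just one $K$-orbit collapsed — so independence of the maps $\yb \mapsto \rho(g \cdot \yb)$ for $g$ ranging over a right transversal of $\Lambda_i < G_i$ persists. This uses essential freeness of $\Lambda_i \times K \actson X_i$ to make the quotient $X_i/K$ a genuine standard probability space on which $\Lambda_i$ acts essentially freely (cf.\ Lemma \ref{lem.compact-free}).

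The main obstacle I anticipate is bookkeeping the interaction between the $K$-quotient and the transversal decomposition cleanly enough to invoke the characterization of the co-induced action verbatim. Specifically: the factor map $\rho_i : Y_i \recht X_i$ is $(\Lambda_i \times K)$-equivariant, so it descends to $\rhob_i : Y_i/K \recht X_i/K$ that is $\Lambda_i$-equivariant; one needs that $\rhob_i$ still satisfies properties 1–3 with $G_i \actson Y_i/K$ and base $\Lambda_i \actson X_i/K$, i.e.\ that ``quotient of co-induced = co-induced of quotient'' in this $K$-diagonal setting. Granting that, the orbit equivalence $\Theta$ (resp.\ the conjugacy) of the bases lifts: one defines the comparison map $\Delta : Y_0/K \recht Y_1/K$ on transversal coordinates by $\Delta(\yb)_{\Lambda_1 g} = \Theta\bigl(\rhob_0(\widetilde{g} \cdot \yb)\bigr)$ for $g$ in a right transversal $I_1$ of $\Lambda_1 < G_1$ and $\widetilde{g} \in G_0$ a chosen preimage determined by the free-product structure and $\delta$, exactly mirroring the construction in the proof of the characterization above. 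Independence and $\sigma$-algebra generation for the right-hand coordinates follow from property 3 for $G_1 \actson Y_1/K$ and the fact that $\Theta$ is a (measure-scaling) Borel isomorphism; the cocycle identity for $\omtil$ guarantees $\Delta$ intertwines orbits (resp.\ intertwines the actions via $\id * \delta$). Once $\Delta$ is shown to be a well-defined nonsingular (resp.\ measure preserving) isomorphism with the orbit-matching property, both bullet points follow. I would structure the write-up as: (i) reduce to the concrete model via Remark \ref{rem.bete}.2 and \ref{rem.bete}.4; (ii) prove the ``$K$-quotient commutes with co-induction'' lemma; (iii) assemble $\omtil$ and define $\Delta$; (iv) verify properties 1–3 for $\Delta$ and conclude via the uniqueness in the characterization.
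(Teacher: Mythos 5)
Your overall strategy -- glue the Zimmer cocycle of the base orbit equivalence with the identity cocycle on $\Gamma$ via the free-product gluing, and then verify the abstract characterization of the co-induced action -- is indeed the skeleton of the paper's argument. But there is a genuine gap at the step you flag as the ``only subtlety'': the claim that quotienting by $K$ commutes with co-induction, i.e.\ that $G_i \actson Y_i/K$ together with $\rhob_i$ satisfies the characterization of the co-induced action of $\Lambda_i \actson X_i/K$. This is false. Writing $Y_i = X_i^{\Lambda_i\backslash G_i}$, the co-induction of $\Lambda_i \actson X_i/K$ is $(X_i/K)^{\Lambda_i\backslash G_i}$, which is the quotient of $Y_i$ by the full product group $K^{\Lambda_i\backslash G_i}$ acting coordinatewise; the space $Y_i/K$ in the theorem is the quotient by the single \emph{diagonal} copy of $K$, which is a much larger space. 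Concretely, the maps $\yb\mapsto\rhob_i(g\cdot\yb)$ for $g$ in a right transversal are indeed independent, but they do \emph{not} generate the $\sigma$-algebra of $Y_i/K$ up to null sets: they forget the relative $K$-phases between distinct coordinates, which survive the diagonal quotient. For the same reason your comparison map $\Delta(\yb)_{\Lambda_1 g} = \Theta\bigl(\rhob_0(\widetilde g\cdot\yb)\bigr)$ lands in $(X_1/K)^{\Lambda_1\backslash G_1}$ rather than in $Y_1/K$, so it cannot be the desired isomorphism. This is precisely why the theorem is stated $K$-equivariantly and why the paper never works on $X_i/K$: it first uses Lemma \ref{lem.compact-free} to lift the orbit equivalence $X_0/K\recht X_1/K$ to a $K$-equivariant isomorphism $X_0\recht X_1$ matching the $(\Lambda_i\times K)$-orbits, then builds the new $G_0$-action on the full space $Y = X^{\Lambda_1\backslash G_1}$ using a $(\Lambda_1\times K)$-valued cocycle $\eta$, proves that this action \emph{upstairs} satisfies the characterization of the co-induction of $\Lambda_0\actson X$, and only at the very end descends to the diagonal $K$-quotient using $K$-equivariance of everything in sight.

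A second, independent problem arises in the orbit-equivalence case (as opposed to the conjugacy case): there is no fixed preimage $\widetilde g\in G_0$ of a transversal element $g\in I_1\subset G_1$, because no homomorphism $G_1\recht G_0$ is available. In the paper the correspondence between the transversals is pointwise: for a.e.\ $y$ the map $g\mapsto\om_1(g,y)$ is a bijection of $I_n$ onto $J_n$, and proving this (together with the fact that $\om(g,\cdot)$ depends only on coordinates $y_j$ with $|j|\le n-1$, which feeds into Lemma \ref{lem.indep}) is the real work of the proof, done by induction on $n$ and using essential freeness of $\Lambda_0\times K\actson X$ to guarantee $\eta_1(\lambda,x)\neq e$ for $\lambda\neq e$. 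Your write-up treats this independence/generation verification as routine, but without the inductive bookkeeping and without working upstairs, neither bullet point of the theorem follows.
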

\begin{proof}
We start by proving the first item of the theorem.

Let $\Delta_0 : X_0/K \recht X_1/K$ be an orbit equivalence between the actions $\Lambda_i \actson X_i/K$. Denote by $x \mapsto \xb$ the factor map from $X_i$ to $X_i / K$. Since $K$ acts essentially freely on $X_i$ and $K$ is compact, Lemma \ref{lem.compact-free} in the appendix provides measurable maps $\theta_i : X_i \recht K$ satisfying $\theta_i(k \cdot x) = k \theta_i(x)$ a.e.\ and such that
$$\Theta_i : X_i \recht K \times X_i/K : x \mapsto (\theta_i(x) , \xb)$$
is a measure preserving isomorphism. Defining $\Delta := \Theta_1^{-1} \circ (\id \times \Delta_0) \circ \Theta_0$, we have found a measure preserving isomorphism $\Delta : X_0 \recht X_1$ that is $K$-equivariant and satisfies $\Delta((\Lambda_0 \times K) \cdot x) = (\Lambda_1 \times K) \cdot \Delta(x)$ for a.e.\ $x \in X_1$. Using this $\Delta$ we may assume that $\Lambda_0, \Lambda_1$ and $K$ act on the same probability space $(X,\mu)$ such that the $K$-action commutes with both the $\Lambda_i$-actions and such that $(\Lambda_0 \times K) * x = (\Lambda_1 \times K) \cdot x$ for a.e.\ $x \in X$. Here and in what follows, we denote the action of $\Lambda_0 \times K$ by $*$ and the action of $\Lambda_1 \times K$ by $\cdot$. We have $k * x = k \cdot x$ for all $k \in K$ and a.e.\ $x \in X$.

Write $Y = X^{\Lambda_1 \backslash \Gamma * \Lambda_1}$ and denote by $\cdot$ the co-induced action $G_1 \actson Y$ of $\Lambda_1 \actson X$ to $G_1$. Also denote by $\cdot$ the diagonal action $K \actson Y$, which commutes with $G_1 \actson Y$. Define the $(\Lambda_1 \times K)$-equivariant factor map $\rho : Y \recht X : \rho(y) = y_{\Lambda_1 e}$.

Define the Zimmer $1$-cocycles
\begin{align*}
& \eta : \Lambda_0 \times X \recht \Lambda_1 \times K : \eta(\lambda_0,x) \cdot x = \lambda_0 * x \quad\text{for a.e.}\;\; x \in X_1, \lambda_0 \in \Lambda_0 \; ,\\
& \eta' : \Lambda_1 \times X \recht \Lambda_0 \times K : \eta'(\lambda_1,x) * x = \lambda_1 \cdot x \quad\text{for a.e.}\;\; x \in X_1, \lambda_1 \in \Lambda_1 \; .
\end{align*}
Since the $\Lambda_0$-action commutes with the $K$-action on $X$, we have that
\begin{equation}\label{eq.my-formula}
\eta(\lambda_0, k * x) = k \eta(\lambda_0,x) k^{-1} \quad\text{for all}\;\; k \in K, \lambda_0 \in \Lambda_0 \;\;\text{and a.e.}\;\; x \in X \; .
\end{equation}
We define a new action $G_0 \actson Y$ denoted by $*$ and determined by
$$\gamma * y  = \gamma \cdot y \;\;\text{for}\;\; \gamma \in \Gamma, y \in Y \quad\text{and}\quad \lambda_0 * y = \eta(\lambda_0, \rho(y)) \cdot y \;\;\text{for}\;\; \lambda_0 \in \Lambda_0, y \in Y \; .$$
Because of \eqref{eq.my-formula}, the action $G_0 \actson Y$ commutes with $K \actson Y$.

Define $\om : G_0 \times Y \recht G_1 \times K$ as the unique $1$-cocycle for the action $G_0 \overset{*}{\actson} Y$ satisfying $\om(\gamma,y) = \gamma$ for all $\gamma \in \Gamma$ and $\om(\lambda_0,y) = \eta(\lambda_0,\rho(y))$ for all $\lambda_0 \in \Lambda_0$. Then the equality $g * y = \om(g,y) \cdot y$ holds when $g \in \Gamma$ and when $g \in \Lambda_0$. So the same equality holds for all $g \in G_0$ and a.e.\ $y \in Y$. In particular $G_0 * \yb \subset G_1 \cdot \yb$ for a.e.\ $\yb \in Y/K$.

Define $\om' : G_1 \times Y \recht G_0 \times K$ as the unique $1$-cocycle satisfying $\om'(\gamma,y) = \gamma$ for all $\gamma \in \Gamma$ and $\om'(\lambda_1,y) = \eta'(\lambda_1,\rho(y))$ for all $\lambda_1 \in \Lambda_1$. As above, it follows that $g \cdot y = \om'(g,y) * y$ for all $g \in G_1$ and a.e.\ $y \in Y$. Hence, $G_1 \cdot \yb \subset G_0 * \yb$ for a.e.\ $\yb \in Y/K$. We already proved the converse inclusion so that $G_1 \cdot \yb = G_0 * \yb$ for a.e.\ $y \in Y/K$.

We prove now that the action $G_0 \overset{*}{\actson} Y$ together with the $\Lambda_0$-equivariant factor map $\rho : Y \recht X$
satisfies the abstract characterization for the co-induced action of $\Lambda_0 \actson X$ to $G_0$. Once this is proven, the theorem follows because $\rho$ is moreover $K$-equivariant and the action $G_0 \actson Y$ commutes with the $K \actson Y$ (see Remark \ref{rem.bete}.4).

We first need to prove that the maps $y \mapsto \rho(g*y)$ are independent and identically distributed when $g$ runs through a right transversal of $\Lambda_0 \subset G_0$. If $g \in G_i = \Gamma * \Lambda_i$, denote by $|g|$ the number of letters from $\Gamma - \{e\}$ that appear in a reduced expression of $g$. By convention, put $|g| = 0$ if $g \in \Lambda_i$. Define the subsets $I_n \subset G_0$ given by $I_0 := \{e\}$ and
$$I_n := \bigl\{g \in G_0 \;\big| \; |g|=n \;\;\text{and the leftmost letter of a reduced expression of $g$ belongs to $\Gamma-\{e\}$}\;\bigr\} \; .$$
Similarly define $J_n \subset G_1$ and note that $\bigcup_{n=0}^\infty J_n$ is a right transversal for $\Lambda_1 < \Gamma * \Lambda_1$.
So, in the construction of the co-induced action, we can choose the $\Lambda_1$-equivariant map $r : G_1 \recht \Lambda_1$ such that $r(g) = e$ for all $g \in J_n$ and all $n \in \N$. Hence $(g \cdot y)_{\Lambda_1 e} = y_{\Lambda_1 g}$ for all $g \in J_n$, $n \in \N$ and a.e.\ $y \in Y$. For $j \in \Lambda_1 \backslash G_1$ we put $|j|=n$ if $j = \Lambda_1 g$ with $g \in J_n$.

Denote $\om(g,y) = (\om_1(g,y),\om_K(g,y))$ with $\om_1(g,y) \in G_1$ and $\om_K(g,y) \in K$. Similarly write $\eta(\lambda,x) = (\eta_1(\lambda,x),\eta_K(\lambda,x))$. Note that for $\lambda \in \Lambda_0 - \{e\}$ we have $\eta_1(\lambda,x) \neq e$ for a.e.\ $x \in X$. Indeed, if $\eta_1(\lambda,x) = e$ for a fixed $\lambda \in \Lambda_0 - \{e\}$, then the element $(\lambda,\eta_K(\lambda,x)^{-1})$ of $\Lambda_0 \times K$ stabilizes $x$ and the essential freeness of $\Lambda_0 \times K \actson X$ implies that this can only happen for $x$ belonging to a negligible subset of $X$.
One then proves easily by induction on $n$ that
\begin{itemize}
\item for a.e.\ $y \in Y$ and all $n \in \N$, the map $g \mapsto \om_1(g,y)$ is a bijection of $I_n$ onto $J_n$,
\item for all $n \in \N, g \in I_n$, the map $y \mapsto \om(g,y)$ only depends on the coordinates $y_j$, $|j| \leq n-1$.
\end{itemize}
Since for all $g \in I_n$ we have $\om_1(g,y) \in J_n$, it follows that
\begin{equation}\label{eq.nuttig}
\rho(g*y) = (g*y)_{\Lambda_1 e} = (\om(g,y) \cdot y)_{\Lambda_1 e} = \om_K(g,y) \cdot y_{\Lambda_1 \om_1(g,y)}
\end{equation}
for all $n \in \N$, $g \in I_n$ and a.e.\ $y \in Y$. We now use Lemma \ref{lem.indep} to prove that for all $n \in \N$, the set
$\{y \mapsto \rho(g*y) \mid g \in I_n\}$ forms a family of independent random variables that are independent of the coordinates $y_j$, $|j| \leq n-1$, and that only depend on the coordinates $y_j$, $|j| \leq n$. More concretely, we write $\cJ_n = \{ \Lambda_1 g \mid |g| \leq n\}$ and we apply Lemma \ref{lem.indep} to the countable set $\cJ_n - \cJ_{n-1}$, the direct product
$$Z := X^{\cJ_{n-1}} \times X^{\cJ_n - \cJ_{n-1}}$$
and the family of measurable maps $\om_g : Z \recht K \times (\cJ_n - \cJ_{n-1})$ indexed by $g \in I_n$, only depending on the coordinates $y_j$, $j \in \cJ_{n-1}$ and given by
$$\om_g : y \mapsto (\om_K(g,y), \Lambda_1 \om_1(g,y)) \; .$$
Since $g \mapsto \om_1(g,y)$ is a bijection of $I_n$ onto $J_n$, we have that $g \mapsto \Lambda_1 \om_1(g,y)$ is a bijection of $I_n$ onto $\cJ_{n} - \cJ_{n-1}$. A
combination of Lemma \ref{lem.indep} and formula \eqref{eq.nuttig} then implies that $\{y \mapsto \rho(g*y) \mid g \in I_n\}$ is a family of independent random variables that are independent of the coordinates $y_j$, $j \in \cJ_{n-1}$. By construction, these random variables only depend on the coordinates $y_j$, $|j| \leq n$.
Having proven these statements for all $n \in \N$, it follows that $\{y \mapsto \rho(g*y) \mid g \in \bigcup_n I_n\}$ is a family of independent random variables.

Denote by $\cB_0$ the smallest $\sigma$-algebra on $Y$ such that $Y \recht X_1 : y \mapsto \rho(g * y)$ is $\cB_0$-measurable for all $g \in G_0$. It remains to prove that $\cB_0$ is the entire $\sigma$-algebra of $Y$. Note that by construction, the map $Y \recht Y : y \mapsto g * y$ is $\cB_0$-measurable for all $g \in G_0$. 
Since $\rho$ is $K$-equivariant and the actions $K \actson Y$ and $G_0 \actson Y$ commute, we also get that the map $y \mapsto k * y$ is $\cB_0$-measurable for every $k \in K$.  We must prove that $y \mapsto y_i$ is $\cB_0$-measurable for every $n \in \N$ and $i \in \Lambda_1 \backslash G_1$ with $|i|=n$. This follows by induction on $n$, because for all $g \in J_n$ we have
$$y_{\Lambda_1 g} = \rho(g \cdot y) = \rho(\om'(g,y) * y)$$
and because $y \mapsto \om'(g,y)$ only depends on the coordinates $y_j$, $|j| \leq n-1$.

To prove the second item of the theorem, it suffices to make the following observation. If the actions $\Lambda_i \actson X_i /K$ are conjugate w.r.t.\ the isomorphism $\delta : \Lambda_0 \recht \Lambda_1$, then in the proof of the first item, the Zimmer $1$-cocycle $\eta$ is of the form $\eta(\lambda_0,x) = (\delta(\lambda_0),\eta_K(\lambda_0,x))$. So the $1$-cocycle $\om : G_0 \times Y \recht G_1 \times K$ is of the form $\om(g,y) = ((\id * \delta)(g),\om_K(g,y))$. This immediately implies that the actions $G_i \actson Y_i/K$ are conjugate w.r.t.\ the isomorphism $\id * \delta$.
\end{proof}

\begin{corollary}[Bowen \cite{Bo09a}] \label{cor.indep-base}
For fixed $n$ and varying base probability space $(X_0,\mu_0)$ the Bernoulli actions $\F_n \actson X_0^{\F_n}$ are orbit equivalent.
\end{corollary}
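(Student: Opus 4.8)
The plan is to recognize each Bernoulli action $\F_n \actson X_i^{\F_n}$ as the co-induction of a Bernoulli action of $\Z$, and then to deduce the orbit equivalence from Theorem \ref{thm.coinduced} together with the classical theorem of Dye. If $(X_0,\mu_0)$ or $(X_1,\mu_1)$ reduces to a single atom the statement is trivial, so I assume throughout that both base spaces are nontrivial; then each Bernoulli action $\Z \actson X_i^{\Z}$ is essentially free and ergodic.

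First I would write $\F_n = \Gamma * \Lambda$ with $\Gamma = \F_{n-1}$ and $\Lambda = \Z$ (for $n = 1$ this degenerates to $\Gamma$ trivial and $\Gamma * \Lambda = \Z$, in which case the corollary is literally Dye's theorem). I then apply Theorem \ref{thm.coinduced} with $\Lambda_0 = \Lambda_1 = \Z$, with $K = \{e\}$ the trivial group, and with $X_i := X_i^{\Z}$ carrying the Bernoulli action of $\Z$; the essential freeness hypothesis holds by the previous paragraph. Since $K$ is trivial, the quotients appearing in Theorem \ref{thm.coinduced} satisfy $X_i/K = X_i$ and $Y_i/K = Y_i$. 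By the theorem of Dye \cite{Dy58}, all free ergodic p.m.p.\ actions of $\Z$ are orbit equivalent, so in particular $\Z \actson X_0^{\Z}$ and $\Z \actson X_1^{\Z}$ are orbit equivalent. Theorem \ref{thm.coinduced} then yields that the co-induced actions $\F_n \actson Y_i$, where $Y_i$ denotes the co-induction of $\Z \actson X_i^{\Z}$ to $\F_n$, are orbit equivalent.

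It remains only to identify each co-induced action $\F_n \actson Y_i$ with the Bernoulli action $\F_n \actson X_i^{\F_n}$, and this is precisely the content of Remark \ref{rem.bete}.1: the Bernoulli action $\F_n \actson X_i^{\F_n}$, together with the canonical factor map $X_i^{\F_n} \recht X_i^{\Z}$, satisfies the abstract characterization of the co-induction of $\Z \actson X_i^{\Z}$ to $\F_n$, and is therefore isomorphic with $\F_n \actson Y_i$. Composing these two isomorphisms with the orbit equivalence obtained above gives an orbit equivalence between $\F_n \actson X_0^{\F_n}$ and $\F_n \actson X_1^{\F_n}$, as desired.

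I do not expect any genuinely hard step here: once Theorem \ref{thm.coinduced} is in hand, the entire argument is the observation that a Bernoulli action of a free group is the co-induction of a Bernoulli action of a cyclic subgroup, combined with Dye's theorem making all the relevant $\Z$-actions orbit equivalent. The only points demanding any care are the exclusion of the atomic base space (needed so that the $\Z$-actions are essentially free and ergodic, as required to invoke Theorem \ref{thm.coinduced}) and the correct matching with the abstract characterization of co-induction in Remark \ref{rem.bete}.1.
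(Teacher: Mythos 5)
Your proposal is correct and follows exactly the paper's own argument: decompose $\F_n = \F_{n-1} * \Z$, invoke Dye's theorem for the Bernoulli $\Z$-actions, apply Theorem \ref{thm.coinduced} (with trivial $K$) to the co-inductions, and identify these with the Bernoulli actions of $\F_n$ via Remark \ref{rem.bete}.1. The only difference is that you make explicit the (harmless) exclusion of the atomic base space, which the paper handles implicitly.
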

\begin{proof}
By Remark \ref{rem.bete}.1, the co-induction of a Bernoulli action is again a Bernoulli action over the same base space. Let $X_0$ and $X_1$ be nontrivial base probability spaces. By Dye's theorem \cite{Dy58}, the Bernoulli actions $\Z \actson X_0^{\Z}$ and $\Z \actson X_1^\Z$ are orbit equivalent. By Theorem \ref{thm.coinduced} their co-induced actions to $\F_n = \F_{n-1} * \Z$ are orbit equivalent. But these co-induced actions are isomorphic to the Bernoulli actions $\F_n \actson X_i^{\F_n}$.
\end{proof}

We used the following easy independence lemma.

\begin{lemma}\label{lem.indep}
Let $(X,\mu)$ and $(X_0,\mu_0)$ be standard probability spaces and let $H \actson (X_0,\mu_0)$ be a measure preserving action.  Let $I$ be a countable set. Consider $Z = X \times X_0^I$ with the product probability measure. Assume that $\cF$ is a family of measurable maps $\om : Z \recht H \times I$. Write $\om(x,y) = (\om_1(x,y),\om_2(x,y))$. Assume that
\begin{itemize}
\item for almost every $z \in Z$, the map $\cF \recht I : \om \mapsto \om_2(z)$ is injective,
\item for every $\om \in \cF$, the map $z \mapsto \om(z)$ only depends on the variable $Z \recht X : (x,y) \mapsto x$.
\end{itemize}
Then, $\{(x,y) \mapsto \om_1(x,y) \cdot y_{\om_2(x,y)} \mid \om \in \cF\}$ is a family of independent identically $(X_0,\mu_0)$-distributed random variables that are independent of $(x,y) \mapsto x$.
\end{lemma}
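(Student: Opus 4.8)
The plan is to prove independence by conditioning on the first coordinate $x$, where the maps $\om \in \cF$ become deterministic, and then to exploit two facts: distinct coordinates of $X_0^I$ are independent with law $\mu_0$, and $H$ acts by measure-preserving transformations. As a preliminary reduction, note that since $\om \mapsto \om_2(z)$ is injective for a.e.\ $z$ and $I$ is countable, the family $\cF$ is itself at most countable; hence it suffices to establish the independence statement for an arbitrary finite subfamily $\om^{(1)},\dots,\om^{(k)} \in \cF$. Using the hypothesis that each $\om$ depends only on the $X$-variable, I would replace $\om$ by a genuine Borel map $\bar\om = (\bar h_\om,\bar\jmath_\om) : X \recht H \times I$ with $\om(x,y)=\bar\om(x)$ for $\nu$-a.e.\ $(x,y)$, where $\nu := \mu \times \mu_0^I$ denotes the product measure on $Z$. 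Because the $\om_2$'s are (a.e.) functions of $x$ alone, the a.e.\ injectivity hypothesis descends to a conull set $X'' \subset X$ such that for every $x \in X''$ the map $\om \mapsto \bar\jmath_\om(x)$ is injective on $\cF$; one should also record at this point that all the maps $(x,y)\mapsto \om_1(x,y)\cdot y_{\om_2(x,y)}$ are $\nu$-measurable, which is clear from measurability of $\om$, of the coordinate projections on $X_0^I$, and of the $H$-action.

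Next I would run a Fubini computation. For Borel sets $A_0 \subset X$ and $A_1,\dots,A_k \subset X_0$,
$$\nu\bigl(\{(x,y): x \in A_0,\ \bar h_{\om^{(l)}}(x)\cdot y_{\bar\jmath_{\om^{(l)}}(x)} \in A_l\ \text{for}\ l=1,\dots,k\}\bigr) = \int_{A_0 \cap X''} \mu_0^I\bigl(\{y : y_{\bar\jmath_{\om^{(l)}}(x)} \in \bar h_{\om^{(l)}}(x)^{-1}\cdot A_l,\ l=1,\dots,k\}\bigr)\,d\mu(x) \; .$$
For each fixed $x \in X''$ the indices $\bar\jmath_{\om^{(1)}}(x),\dots,\bar\jmath_{\om^{(k)}}(x)$ are pairwise distinct, so the corresponding coordinates of $X_0^I$ are independent with law $\mu_0$ under $\mu_0^I$; since the $H$-action is measure preserving, the inner measure equals $\prod_{l=1}^k \mu_0\bigl(\bar h_{\om^{(l)}}(x)^{-1}\cdot A_l\bigr) = \prod_{l=1}^k \mu_0(A_l)$, which does not depend on $x$. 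As $X''$ is conull, the whole expression therefore equals $\mu(A_0)\,\prod_{l=1}^k \mu_0(A_l)$.

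Finally, a standard monotone-class (or $\pi$-$\lambda$) argument upgrades this identity on measurable rectangles to the assertion that the random variables $(x,y)\mapsto x$ and $Z_{\om^{(l)}} := \om^{(l)}_1(x,y)\cdot y_{\om^{(l)}_2(x,y)}$, $l=1,\dots,k$, are jointly independent on $(Z,\nu)$ and that each $Z_{\om^{(l)}}$ is $(X_0,\mu_0)$-distributed; letting the finite subfamily vary through all finite subsets of $\cF$ then yields the lemma. The only genuinely delicate point is the bookkeeping around the ``a.e.'' and ``depends only on $x$'' hypotheses — in particular observing that injectivity into a countable set forces $\cF$ to be countable, and that replacing each $\om$ by an honest function of $x$ turns the a.e.\ injectivity on $Z$ into injectivity on a fixed conull subset of $X$, so that the conditional computation is valid for $\mu$-a.e.\ $x$. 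Once this is arranged the argument is immediate, with the measure-preservation of the $H$-action doing all the real work.
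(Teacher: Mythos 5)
Your proof is correct and follows essentially the same route as the paper's: view each $\om$ as a function of $x$ alone, condition on $x$, and use the distinctness of the indices $\om_2(x)$ together with the measure-preservation of the $H$-action; you merely spell out the Fubini and monotone-class details that the paper leaves implicit.
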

\begin{proof}
Since the maps $\om \in \cF$ only depend on the variable $(x,y) \mapsto x$, we view $\om \in \cF$ as a map from $X$ to $H \times I$. We have to prove that $\{(x,y) \mapsto \om_1(x) \cdot y_{\om_2(x)} \mid \om \in \cF\}$ is a family of independent identically $(X_0,\mu_0)$-distributed random variables that are independent of $(x,y) \mapsto x$. But conditioning on $x \in X$, we get that the variables
$$X_0^I \recht X_0 : y \mapsto \om_1(x) \cdot y_{\om_2(x)}$$
are independent and $(X_0,\mu_0)$-distributed because the coordinates $\om_2(x)$, for $\om \in \cF$, are distinct elements of $I$ and because the action $H \actson X_0$ is measure preserving. So the lemma is proven.
\end{proof}

\section{Stable orbit equivalence of Bernoulli actions}

Denote by $a,b$ the standard generators of $\F_2$. Denote by $\langle a \rangle$ and $\langle b \rangle$ the subgroups of $\F_2$ generated by $a$, resp.\ $b$. Let $(X_0,\mu_0)$ be a standard probability space and consider the Bernoulli action $\F_2 \actson X_0^{\F_2}$ given by $(g \cdot x)_h = x_{hg}$.

Whenever $(X_0,\mu_0)$ is a probability space, the Bernoulli action $\Gamma \actson X_0^\Gamma$ can be characterized up to isomorphism as the unique p.m.p.\ action $\Gamma \actson X$ for which there exists a factor map $\pi : X \recht X_0$ such that the maps $x \mapsto \pi(g \cdot x)$, $g \in \Gamma$, are independent and generate, up to null sets, the whole $\sigma$-algebra of $X$.

We prove the stable orbit equivalence of Bernoulli actions as a combination of the following three lemmas. Fix $\kappa \in \N$, $\kappa \geq 2$, and denote $X_0 = \{0,\ldots,\kappa-1\}$ equipped with the uniform probability measure. Let $(Y_0,\eta_0)$ be any standard probability space (that is not reduced to a single atom). Denote by $r : \F_2 \recht \Z/\kappa \Z$ the group morphism determined by $r(a) = 0$ and $r(b) = 1$. Identify $X_0$ with $\Z/\kappa \Z$ and denote by $\cdot$ the action of $\Z/\kappa \Z$ on $X_0$ given by addition in $\Z/\kappa \Z$.

\begin{lemma}\label{lem.een}
Consider the action $\F_2 \actson X := X_0^{\langle b \rangle \backslash \F_2}$ given by $(g \cdot x)_{\langle b \rangle h} = r(g) \cdot x_{\langle b \rangle h g}$. Let $\F_2 \actson Y_0^{\F_2}$ be the Bernoulli action. Then the diagonal action $\F_2 \actson X \times Y_0^{\F_2}$ given by $g \cdot (x,y) = (g \cdot x,g \cdot y)$ is orbit equivalent with a Bernoulli action of $\F_2$.
\end{lemma}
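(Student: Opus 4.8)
The plan is to recognise $\F_2 \actson X \times Y_0^{\F_2}$ as a co-induced action and then combine Theorem~\ref{thm.coinduced} with Dye's theorem. Write $\F_2 = \langle a\rangle * \langle b\rangle$ with $\langle b\rangle \cong \Z$, and let $\langle b\rangle$ act on $X_0 = \Z/\kappa\Z$ through the quotient homomorphism $\langle b\rangle \recht \Z/\kappa\Z$, $b \mapsto 1$ (equivalently through $r|_{\langle b\rangle}$), followed by the translation action of $\Z/\kappa\Z$ on itself. This action is very far from free, but co-induction does not require freeness. I would first observe that $\F_2 \actson X$ is exactly the co-induced action of $\langle b\rangle \actson X_0$ to $\F_2$: since $\langle b\rangle$ is a retract of $\F_2$, Remark~\ref{rem.bete}.2 gives the co-induced action in the form $(g \cdot x)_{\langle b\rangle h} = \pi(g) \cdot x_{\langle b\rangle h g}$ with $\pi : \F_2 \recht \langle b\rangle$ the retraction, and since $\langle b\rangle$ acts on $X_0$ via $r$ this is precisely the formula defining $\F_2 \actson X$. (Alternatively, check the three conditions of the abstract characterisation with $\rho(x) = x_{\langle b\rangle e}$: $\langle b\rangle$-equivariance is immediate; the maps $x \mapsto \rho(g \cdot x) = r(g) \cdot x_{\langle b\rangle g}$ recover all coordinates because $r(g)\cdot$ is a bijection of $X_0$; and these maps are independent along a right transversal of $\langle b\rangle < \F_2$ because distinct transversal elements index distinct coordinates.)

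Next, by Remark~\ref{rem.bete}.1 the Bernoulli action $\F_2 \actson Y_0^{\F_2}$ is the co-induced action of the Bernoulli action $\langle b\rangle \actson Y_0^{\langle b\rangle}$, so by Remark~\ref{rem.bete}.3 the diagonal action $\F_2 \actson X \times Y_0^{\F_2}$ is the co-induced action of the diagonal action $\langle b\rangle \actson X_0 \times Y_0^{\langle b\rangle}$ to $\F_2$. Now $\langle b\rangle \cong \Z$, and the diagonal action $\langle b\rangle \actson X_0 \times Y_0^{\langle b\rangle}$ is essentially free (the Bernoulli factor is, since $Y_0$ is nontrivial) and ergodic (the Bernoulli factor is mixing, hence weakly mixing, and the product of a weakly mixing action with an ergodic one is ergodic). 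The Bernoulli action $\langle b\rangle \actson Y_0^{\langle b\rangle}$ is likewise a free ergodic p.m.p.\ $\Z$-action, so by Dye's theorem \cite{Dy58} the two are orbit equivalent.

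Finally, applying Theorem~\ref{thm.coinduced} with $K = \{e\}$, $\Gamma = \langle a\rangle$ and $\Lambda_0 = \Lambda_1 = \langle b\rangle$, the co-inductions of these two $\langle b\rangle$-actions to $\F_2$ are orbit equivalent; that is, $\F_2 \actson X \times Y_0^{\F_2}$ is orbit equivalent to the co-induction of $\langle b\rangle \actson Y_0^{\langle b\rangle}$, which by Remark~\ref{rem.bete}.1 is the Bernoulli action $\F_2 \actson Y_0^{\F_2}$. The only step with any real content is the identification of $\F_2 \actson X$ with the co-induced action of $\langle b\rangle \actson X_0$; the main (minor) obstacle is bookkeeping with the non-free action $\langle b\rangle \actson X_0$ and lining up the conventions for $r$, $\pi$ and the rotation of $\Z/\kappa\Z$, after which everything is a direct assembly of Theorem~\ref{thm.coinduced}, Remark~\ref{rem.bete} and Dye's theorem.
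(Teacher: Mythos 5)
Your proof is correct and follows essentially the same route as the paper: both identify $\F_2 \actson X \times Y_0^{\F_2}$ (via Remark \ref{rem.bete}, statements 1--3) as the co-induction to $\F_2$ of the free ergodic $\Z$-action $b^n \cdot (x,y) = (r(b^n)\cdot x, b^n\cdot y)$ on $X_0 \times Y_0^{\Z}$, then apply Dye's theorem and Theorem \ref{thm.coinduced}. Your extra details (the verification via the abstract characterization, and the freeness/weak-mixing argument for the base $\Z$-action) are accurate and merely make explicit what the paper leaves to the reader.
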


\begin{lemma}\label{lem.twee}
The action $\F_2 \actson X$ defined in Lemma \ref{lem.een} is stably orbit equivalent with compression constant $1/\kappa$ with a Bernoulli action of $\F_{1 + \kappa}$.
\end{lemma}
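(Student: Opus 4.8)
The plan is to realize the stable orbit equivalence inside $X$ itself: restrict $\F_2 \actson X$ to a set of measure $1/\kappa$ and put on it a free action of $\F_{1+\kappa}$ with the same orbits which satisfies the abstract characterization of a Bernoulli action recalled at the beginning of this section. Two objects are fixed at the outset. First the subgroup $\Lambda := \Ker(r)$: by the Nielsen--Schreier formula it is free of rank $1+\kappa$, and a Reidemeister--Schreier computation with the transversal $\{e,b,\ldots,b^{\kappa-1}\}$ shows that it is freely generated by $b^\kappa$ together with the elements $b^j a b^{-j}$, $j=0,\ldots,\kappa-1$. Secondly the subset $\cU := \{x \in X : x_{\langle b \rangle e} = 0\}$, which has measure $1/\kappa$ since $X_0 = \Z/\kappa\Z$ carries its uniform probability measure. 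Once an $\F_{1+\kappa}$-action on $\cU$ with the above properties is found, the identity map of $\cU$ is the desired stable orbit equivalence and its compression constant equals $\mu(\cU) = 1/\kappa$.

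To build the action I would use that $\cU$ is a cross section of the flow $\Z \cong \langle b \rangle \actson X$ with constant return time $\kappa$ (since $(b^n x)_{\langle b \rangle e} = n + x_{\langle b \rangle e}$ modulo $\kappa$), so that $b^\kappa$ induces an automorphism $\gamma$ of $\cU$ playing the role of the last generator. The remaining $\kappa$ generators $\al_0,\ldots,\al_{\kappa-1}$ I would obtain from the single generator $a$ of the complementary free factor by following the $\{a,b\}$-treeing of $\F_2 \actson X$: an $a$-step out of $\cU$ lands at the ``$\langle b \rangle$-level'' $x_{\langle b \rangle a}$, a fresh uniformly distributed coordinate, and correcting by the appropriate power of $b$ according to that level produces $\kappa$ automorphisms of $\cU$. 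Checking that $\gamma,\al_0,\ldots,\al_{\kappa-1}$ are well-defined automorphisms, that they generate $\F_{1+\kappa}$ freely, and that the equivalence relation they generate is $x \mapsto \F_2 \cdot x \cap \cU$, is bookkeeping with reduced words in $\F_2 = \langle a \rangle * \langle b \rangle$ and the malnormality of $\langle b \rangle$; it is most conveniently organized through the identification $X \cong \Z/\kappa\Z \times \cU$ via $b^i u \leftrightarrow (i,u)$, under which $\F_2 \actson X$ becomes an induced-type action and the transformations above are the values of its first-return cocycle along $\cU$.

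The heart of the proof is then the verification of the Bernoulli characterization: one must exhibit a factor map $\pi : \cU \recht Y_0$ for which the maps $u \mapsto \pi(g \cdot u)$, $g \in \F_{1+\kappa}$, are independent and generate the Borel $\sigma$-algebra of $\cU$ up to null sets. The natural $\pi$ records the finitely many coordinates of $X$ that an $a$-step first discloses, and independence should follow exactly as in the proof of Theorem \ref{thm.coinduced}: organize the coordinates $x_c$, $c \in \langle b \rangle \backslash \F_2$, by the number of letters $a^{\pm 1}$ needed to reach $c$ and apply Lemma \ref{lem.indep} level by level, using that these coordinates of a point of $X$ are independent. I expect the main obstacle to be precisely this independence argument in the presence of the $x$-dependent powers of $b$ hidden in the generators $\al_j$: one has to check that each ``correction by a power of $b$'' still uncovers genuinely new, independent coordinates of $X$, and that, as $g$ ranges over $\F_{1+\kappa}$, the values $\pi(g \cdot x)$ account for all coordinates of $X$, hence for the whole $\sigma$-algebra of $\cU$. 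Granting this, the rank count, the freeness, the equality of orbits and the value $1/\kappa$ of the compression constant are routine.
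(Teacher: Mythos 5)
Your overall strategy is the paper's: restrict to a subset of measure $1/\kappa$, build there a free $\F_{1+\kappa}$-action with the same orbits, and verify the abstract Bernoulli characterization; your set $\cU=\{x : x_{\langle b\rangle e}=0\}$ is exactly the paper's $W_0$, and the count $1+\kappa$ of generators is right. The gap is in the construction of the $\kappa$ generators coming from $a$: ``correcting by the appropriate power of $b$ according to the level'' does not produce automorphisms of $\cU$. Concretely, take the Schreier generator $b^jab^{-j}$, set $i:=x_{\langle b\rangle ab^{-j}}$ for $x\in\cU$ and define $\al_j(x):=b^{j-i}ab^{-j}\cdot x$, which is the natural reading of your recipe. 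Writing $y=\al_j(x)$ one computes $y_{\langle b\rangle a^{-1}b^{i-j}}=-i$, so $\al_j$ maps the level set $\{x\in\cU : x_{\langle b\rangle ab^{-j}}=i\}$ bijectively onto $\{y\in\cU : y_{\langle b\rangle a^{-1}b^{i-j}}=-i\}$; since the coordinates $y_{\langle b\rangle a^{-1}b^{m}}$, $m\in\Z$, of a point of $\cU$ are i.i.d.\ uniform on $\Z/\kappa\Z$, these $\kappa$ images overlap and do not cover $\cU$, so $\al_j$ is neither injective nor surjective. The same computation rules out every variant in which the correction is a ($x$-dependent) power of $b$: one cannot read off from the image which power was used. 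This is exactly the difficulty the paper's proof is built around: the correction must be made in the $a$-direction, via measure preserving isomorphisms $\theta_i:W_0\recht W_i$ chosen \emph{inside the $\langle a\rangle$-orbit equivalence relation} by an ergodicity argument (they are not given by a formula), and the generators are $b_i*x=\theta_{i+1}^{-1}(b\cdot\theta_i(x))$.

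Even granting correct generators, the heart of the lemma --- the Bernoulli property --- is only gestured at in your sketch. The paper's mechanism is to apply Dye's theorem a second time, replacing the induced transformation of the $a$-shift on $V_0$ by an orbit equivalent genuine Bernoulli $\Z$-action, and then to exhibit $\F_{1+\kappa}\actson W_0$ as the co-induction of that Bernoulli action from $\langle a\rangle$ to $\F_{1+\kappa}$. The independence over a transversal of $\langle a\rangle$ is then proved by an induction on the number of $b^{\pm1}$-letters using Lemma \ref{lem.indep}, and the step that makes the induction close is showing that the $x$-dependent indices $\langle b\rangle a^m\om_i^\eps(g,x)$ lie in $\cC(n+1)-\cC(n)$ and are pairwise distinct, which in turn requires the identity $\om'(\om(g,x),x)=g$ and the essential freeness of the new action. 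Your observation that $b^\kappa|_{\cU}$ is literally a Bernoulli shift is correct and would be a nice alternative anchor, but you would then need the co-induction characterization relative to $\langle b^\kappa\rangle$, whose infinite-dimensional base and transversal-independence you neither specify nor verify. As written, nothing in the proposal replaces the second application of Dye's theorem or the inductive independence argument, and these constitute essentially all of the paper's proof of this lemma.
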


\begin{lemma}\label{lem.drie}
Let $\Gamma \actson (X,\mu)$ be any free ergodic p.m.p.\ action of an infinite group $\Gamma$. Assume that $\kappa \in \N$ and that $\Gamma \actson X$ is stably orbit equivalent with compression constant $1/\kappa$ with a Bernoulli action of some countable group $\Lambda$. Let $(Y_0,\eta_0)$ be any standard probability space and $\Gamma \actson Y_0^\Gamma$ the Bernoulli action. Then also the diagonal action $\Gamma \actson X \times Y_0^\Gamma$ is stably orbit equivalent with compression constant $1/\kappa$ with a Bernoulli action of $\Lambda$.
\end{lemma}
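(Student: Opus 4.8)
The plan is to show that stable orbit equivalence of $\Gamma \actson X$ with a Bernoulli action is ``stable under diagonally multiplying by a Bernoulli factor''. Write $\Gamma \actson Z_0^\Gamma$ for a Bernoulli action of $\Lambda$ that is stably orbit equivalent (with compression constant $1/\kappa$) to $\Gamma \actson X$ — here I abuse notation and really mean a Bernoulli action $\Lambda \actson Z_0^\Lambda$; set $Z := Z_0^\Lambda$. By hypothesis there is a nonnegligible $\cU \subset X$, a subset $\cV \subset Z$ with $\eta(\cV) = \mu(\cU)/\kappa$, and a measure-scaling isomorphism $\Delta_0 : \cU \recht \cV$ carrying $\Gamma$-orbits on $\cU$ to $\Lambda$-orbits on $\cV$. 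Let $\om : \Gamma \times X \recht \Lambda$ be the associated Zimmer $1$-cocycle, defined via a Borel map $p : X \recht \cU$ with $p(x) \in \Gamma \cdot x$. The idea is that the diagonal factor $Y_0^\Gamma$ can be ``induced along $\om$'': the $1$-cocycle $\om$ lets us turn the Bernoulli action $\Gamma \actson Y_0^\Gamma$ into a Bernoulli action of $\Lambda$, and this new Bernoulli factor combines with $Z_0^\Lambda$ into a single Bernoulli action of $\Lambda$ that is stably orbit equivalent with $\Gamma \actson X \times Y_0^\Gamma$.

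More concretely, first I would set up the orbit-equivalence-induced action. Using the cocycle $\om$ we form the action $\Lambda \actson \cV \times Y_0^\Gamma$ by letting $\Lambda$ act on $\cV$ as it does inside the Bernoulli relation and defining the second coordinate through $\om$; equivalently, on $\cU \times Y_0^\Gamma$ identify the $\Gamma$-action restricted to $\cU$ (via $p$) with a $\Lambda$-action, transported through $\Delta_0$. The upshot is an action $\Lambda \actson \cV \times Y_0^\Gamma$ whose orbit equivalence relation, under $\Delta_0 \times \id$, is exactly the restriction to $\cU \times Y_0^\Gamma$ of the diagonal $\Gamma$-relation on $X \times Y_0^\Gamma$ — this is just the standard fact that inducing the diagonal factor through a (stable) orbit equivalence preserves the relation. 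So $\Gamma \actson X \times Y_0^\Gamma$ is stably orbit equivalent, with compression constant $1/\kappa$, to $\Lambda \actson \cV \times Y_0^\Gamma$, and it remains to identify this latter action, up to conjugacy, with a Bernoulli action of $\Lambda$.

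The key step is therefore to verify that $\Lambda \actson \cV \times Y_0^\Gamma$ is a Bernoulli action. I would use the abstract characterization of Bernoulli actions recalled at the start of Section 3: I must exhibit a factor map $\pi$ onto some base space whose $\Lambda$-translates are independent and generate everything. The natural candidate base is a countable product of copies of $Y_0$ indexed by the (countable) number of $\Gamma$-orbit representatives needed to fill a $\Lambda$-fundamental domain — more precisely, since $\cV$ has measure $1/\kappa$ and $\Lambda$ acts on $Z = Z_0^\Lambda$ with this as (the measure of) a domain on which the relation is induced, a $\Lambda$-orbit in $\cV \times Y_0^\Gamma$ meets the $Y_0^\Gamma$ fibre over each point finitely/countably often, and the relevant coordinates of $Y_0^\Gamma$ that ``appear'' form the new base. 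The independence is inherited from the independence of the Bernoulli coordinates of $Y_0^\Gamma$ together with the fact that the cocycle $\om$ does not destroy it — this is exactly the kind of bookkeeping packaged in Lemma \ref{lem.indep}. Once I know $\Lambda \actson \cV \times Y_0^\Gamma$ is Bernoulli (with some base of the same ``shape'' as before, enlarged by a power of $Y_0$), the conclusion is immediate.

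The main obstacle I expect is the bookkeeping in the last step: one must control, for the relation restricted to $\cV$, which $Y_0^\Gamma$-coordinates the cocycle $\om$ references as $\Lambda$ acts, show they are hit independently and with the right multiplicities, and conclude the generated $\sigma$-algebra is everything. This is purely combinatorial — an application of the independence lemma plus the characterization of product spaces from Section \ref{sec.prelim} — but it is where the real content lies; the orbit-equivalence transport in the earlier steps is essentially formal. A cleaner route, if available, is to avoid constructing $\cV \times Y_0^\Gamma$ by hand and instead first prove a clean statement of the form ``if $\Gamma \actson X$ is stably orbit equivalent with compression $1/\kappa$ to $\Lambda \actson W$, then for any base $Y_0$ the action $\Gamma \actson X \times Y_0^\Gamma$ is stably orbit equivalent with the same constant to $\Lambda \actson W \times (Y_0^\kappa)^\Lambda$'', and then specialize to $W$ Bernoulli so that $W \times (Y_0^\kappa)^\Lambda$ is again Bernoulli; I would aim for that formulation.
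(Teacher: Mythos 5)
Your plan is correct and follows essentially the same route as the paper: transport the relation to get $\Lambda \actson X_1 \times Y_0^\Gamma$ via the Zimmer cocycle, then verify the abstract Bernoulli characterization with enlarged base $U \times Y_0^\kappa$ (your $W \times (Y_0^\kappa)^\Lambda$ formulation at the end is exactly what the paper proves). The one piece of bookkeeping you leave implicit, which the paper makes precise, is to choose a partition $X = X_1 \sqcup \cdots \sqcup X_\kappa$ with maps $\theta_i(x) = \vphi_i(x)\cdot x$ and check that $\bigl(\vphi_i(\lambda * x)\,\om(\lambda,x)\bigr)_{\lambda,i}$ enumerates $\Gamma$ without repetition (using essential freeness of both actions), which is what feeds Lemma \ref{lem.indep} and identifies the base as $U \times Y_0^\kappa$.
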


\subsubsection*{Proof of Theorem \ref{thm.A}}

We already deduce Theorem \ref{thm.A} from the above three lemmas.

\begin{proof}[Proof of Theorem \ref{thm.A}]
We first prove that Lemmas \ref{lem.een}, \ref{lem.twee}, \ref{lem.drie} yield a Bernoulli action of $\F_2$ that is stably orbit equivalent with compression constant $1/\kappa$ with a Bernoulli action of $\F_{1+\kappa}$. Indeed, by Lemma \ref{lem.een} a Bernoulli action of $\F_2$ is orbit equivalent with the diagonal action $\F_2 \actson X \times Y_0^{\F_2}$. By Lemma \ref{lem.twee}, the action $\F_2 \actson X$ is stably orbit equivalent with compression constant $1/\kappa$ with a Bernoulli action of $\F_{1+\kappa}$. But then, Lemma \ref{lem.drie} says that the same holds for the diagonal action $\F_2 \actson X \times Y_0^{\F_2}$.

Combined with Corollary \ref{cor.indep-base} it follows that \emph{all} Bernoulli actions of $\F_2$ are stably orbit equivalent with all Bernoulli actions of $\F_m$, $m \geq 2$, with compression constant $1/(m-1)$. By transitivity of stable orbit equivalence, all Bernoulli actions of $\F_n$ and $\F_m$ are stably orbit equivalent with compression constant $(n-1)/(m-1)$.
\end{proof}

\subsubsection*{Proof of Lemma \ref{lem.een}}

\begin{proof}[Proof of Lemma \ref{lem.een}]
View $\Z$ as the subgroup of $\F_2$ generated by $b$. Let $\Z \actson Y_0^\Z$ be the Bernoulli action. Consider the action $\Z \actson X_0 \times Y_0^\Z$ given by $g \cdot (x,y) = (r(g) \cdot x, g \cdot y)$. Note that $\Z \actson X_0 \times Y_0^\Z$ is a free ergodic p.m.p.\ action. Using Remark \ref{rem.bete} (statements 1, 2 and 3), one gets that the action $\F_2 \actson X \times Y_0^{\F_2}$ given in the formulation of Lemma \ref{lem.een} is precisely the co-induction of $\Z \actson X_0 \times Y_0^\Z$ to $\F_2$. By Dye's theorem \cite{Dy58}, the free ergodic p.m.p.\ action $\Z \actson X_0 \times Y_0^{\Z}$ is orbit equivalent with a Bernoulli action of $\Z$. By Remark \ref{rem.bete}.1, the co-induction of the latter is a Bernoulli action of $\F_2$. So by Theorem \ref{thm.coinduced}, the action $\F_2 \actson X \times Y_0^{\F_2}$ is orbit equivalent with a Bernoulli action of $\F_2$.
\end{proof}

\subsubsection*{Proof of Lemma \ref{lem.twee}}

\begin{proof}[Proof of Lemma \ref{lem.twee}]
We have $X = X_0^{\langle b \rangle \backslash \F_2}$ and the action $\F_2 \actson X$ is given by $(g \cdot x)_{\langle b \rangle h} = r(g) \cdot x_{\langle b \rangle hg}$. Write $Z = X_0^\Z$ and denote by $\rho : X \recht Z$ the factor map given by $\rho(x)_n = x_{\langle b \rangle a^n}$. Denote by $\cdot$ the Bernoulli action $\Z \actson Z$ and note that $\rho(a^n \cdot x) = n \cdot \rho(x)$ for all $x \in X$ and $n \in \Z$.

Define the subsets $V_i$, $i=0,\ldots,\kappa-1$, of $Z$ given by $V_i := \{z \in Z \mid z_0 = i\}$. Similarly define $W_i \subset X$ given by $W_i = \rho^{-1}(V_i)$. Note that $W_0$ has measure $1/\kappa$. To prove the lemma we define a p.m.p.\ action of $\F_{1+\kappa}$ on $W_0$ such that $\F_{1+\kappa} * x = \F_2 \cdot x \cap W_0$ for a.e.\ $x \in W_0$ and such that $\F_{1+\kappa} \actson W_0$ is a Bernoulli action.

By Dye's theorem \cite{Dy58}, there exists a Bernoulli action $\Z \overset{*}{\actson} V_0$ such that $\Z * z = \Z \cdot z \cap V_0$ for a.e.\ $z \in V_0$. Denote by $\eta : \Z \times V_0 \recht \Z$ the corresponding $1$-cocycle for the $*$-action determined by $n * z = \eta(n,z) \cdot z$ for $n \in \Z$ and a.e.\ $z \in V_0$.

Since the Bernoulli action $\Z \overset{\cdot}{\actson} Z$ is ergodic and since all the subsets $V_i \subset Z$ have the same measure, we can choose measure preserving isomorphisms $\al_i : V_0 \recht V_i$ satisfying $\al_i(z) \in \Z \cdot z$ for a.e.\ $z \in \Z$ and take $\al_0$ to be the identity isomorphism. Let $\vphi^0_i : V_0 \recht \Z$ and $\psi^0_i : V_i \recht \Z$ be the maps determined by $\al_i(z) = \vphi^0_i(z) \cdot z$ for a.e.\ $z \in V_0$ and $\al_i^{-1}(z) = \psi^0_i(z) \cdot z$ for a.e.\ $z \in V_i$. Define the corresponding measure preserving isomorphisms $\theta_i : W_0 \recht W_i$ given by $\theta_i(x) = \vphi_i(x) \cdot x$ and $\theta_i^{-1}(x) = \psi_i(x) \cdot x$ where $\vphi_i(x) = a^{\vphi^0_i(\rho(x))}$ and $\psi_i(x) = a^{\psi^0_i(\rho(x))}$.

Denote by $a$ and $b_i$, $i = 0,\ldots,\kappa-1$, the generators of $\F_{1+\kappa}$. Define the p.m.p.\ action $\F_{1+\kappa} \overset{*}{\actson} W_0$ given by
$$a^n * x = a^{\eta(n,\rho(x))} \cdot x \quad\text{and}\quad b_i * x = \theta_{i+1}^{-1} (b \cdot \theta_i(x)) \quad\text{for all}\;\; x \in W_0 \; .$$
Note that the action is well defined: if $x \in W_0$, then $\theta_i(x) \in W_i$ and hence $b \cdot \theta_i(x) \in W_{i+1}$. We use the convention that $W_\kappa = W_0$ and $\theta_\kappa = \id$. Observe that $\rho(a^n * x) = n * \rho(x)$ for all $n \in \Z$ and a.e.\ $x \in W_0$.

It remains to prove that $\F_{1+\kappa} * x = \F_2 \cdot x \cap W_0$ for a.e.\ $x \in W_0$ and that $\F_{1+\kappa} \actson W_0$ is a Bernoulli action.

Denote by $\om : \F_{1+\kappa} \times W_0 \recht \F_2$ the unique $1$-cocycle for the $*$-action determined by
$$\om(a^n,x) = a^{\eta(n,\rho(x))} \quad\text{and}\quad \om(b_i,x) = \psi_{i+1}(b \cdot \theta_i(x)) \, b \, \vphi_i(x) \; .$$
By construction, the formula $g * x = \om(g,x) \cdot x$ holds for all $g \in \{a,b_0,\ldots,b_{\kappa-1}\}$ and a.e.\ $x \in W_0$. Since $\om$ is a $1$-cocycle for the action $\F_{1+\kappa} \overset{*}{\actson} W_0$, the same formula holds for all $g \in \F_{1+\kappa}$ and a.e.\ $x \in W_0$. In particular, $\F_{1+\kappa} * x \subset \F_2 \cdot x \cap W_0$ for a.e.\ $x \in W_0$. To prove the converse inclusion we define the inverse $1$-cocycle for $\om$.

Define $q_0 : Z \recht V_0$ given by $q_0(z) = \al_i^{-1}(z)$ when $z \in V_i$. Denote by $\eta' : \Z \times Z \recht \Z$ the $1$-cocycle for the $\cdot$-action determined by $q_0(n \cdot z) = \eta'(n,z) * q_0(z)$. Whenever $z \in V_0$, we have $z = q_0(z)$ and hence
\begin{equation}\label{eq.sven}
\eta'(\eta(n,z),z) * z = \eta'(\eta(n,z),z) * q_0(z) = q_0(\eta(n,z) \cdot z) = q_0(n*z) = n * z \; .
\end{equation}
Since $*$ is an essentially free action of $\Z$, it follows that $\eta'(\eta(n,z),z) = n$ for all $n \in \Z$ and a.e.\ $z \in V_0$.

Denote by $\om' : \F_2 \times X \recht \F_{1+\kappa}$ the unique $1$-cocycle for the $\cdot$-action determined by
$$\om'(a^n,x) = a^{\eta'(n,\rho(x))} \quad\text{for $n \in \Z$ and a.e.\ $x \in X$, and}\quad \om'(b,x) = b_i \quad\text{for a.e.\ $x \in W_i$.}$$
Define $q : X \recht W_0$ given by $q(x) = \theta_i^{-1}(x)$ when $x \in W_i$. Note that $\rho(q(x)) = q_0(\rho(x))$ for a.e.\ $x \in X$.
We prove that $q(g \cdot x) = \om'(g,x) * q(x)$ for all $g \in \F_2$ and a.e.\ $x \in X$. If $g = a^n$ for some $n \in \Z$, we know that both $q(g \cdot x)$ and $\om'(g,x) * q(x)$ belong to $\langle a \rangle \cdot x$. So to prove that they are equal, it suffices to check that they have the same image under $\rho$. The following computation shows that this is indeed the case.
\begin{align*}
& \rho(q(a^n \cdot x)) = q_0(\rho(a^n \cdot x)) = q_0(n \cdot \rho(x)) = \eta'(n,\rho(x)) * q_0(\rho(x)) \quad\text{while,} \\
& \rho(\om'(a^n,x) * q(x)) = \rho(a^{\eta'(n,\rho(x))} * q(x)) = \eta'(n,\rho(x)) * \rho(q(x)) = \eta'(n,\rho(x)) * q_0(\rho(x)) \; .
\end{align*}
Since by definition of the action $*$ we have that $b_i * \theta_i^{-1}(x) = \theta_{i+1}^{-1}(b \cdot x)$ whenever $x \in W_i$, the formula $\om'(g,x) * q(x) = q(g \cdot x)$ also holds when $g = b$. Hence, the same formula holds for all $g \in \F_2$ and a.e.\ $x \in X$. In particular, $\F_2 \cdot x \cap W_0 \subset \F_{1+\kappa} * x$ for a.e.\ $x \in W_0$. The converse inclusion was already proven above. Hence, $\F_{1+\kappa} * x = \F_2 \cdot x \cap W_0$ for a.e.\ $x \in W_0$.

Denote by $\cJ \subset \F_{1+\kappa}$ the union of $\{e\}$ and all the reduced words that start with one of the letters $b_i^{\pm 1}$, $i = 0,\ldots,\kappa-1$. Note that $\cJ$ is a right transversal for $\langle a \rangle < \F_{1+\kappa}$. It remains to prove that
$$\{ W_0 \recht V_0 : x \mapsto \rho(g * x) \mid g \in \cJ\}$$
is a family of independent random variables that generate, up to null sets, the whole $\si$-algebra on $W_0$. Indeed, we already know that $\Z \overset{*}{\actson} V_0$ is a Bernoulli action so that it will follow that $\F_{1+\kappa} \actson W_0$ is the co-induction of a Bernoulli action, hence a Bernoulli action itself (see Remark \ref{rem.bete}.1).

We equip both $\F_2$ and $\F_{1+\kappa}$ with a length function. For $g \in \F_2$ we denote by $|g|$ the number of letters $b^{\pm 1}$ appearing in the reduced expression of $g$, while for $g \in \F_{1+\kappa}$ we denote by $|g|$ the number of letters $b_i^{\pm 1}$, $i = 0,\ldots,\kappa-1$, appearing in the reduced expression of $g$. By induction on the length of $g$, one easily checks that $|\om(g,x)| \leq |g|$ for all $g \in \F_{1+\kappa}$ and a.e.\ $x \in W_0$, and that $|\om'(g,x)| \leq |g|$ for all $g \in \F_2$ and a.e.\ $x \in X$.

We next claim that
\begin{equation}\label{eq.inverse}
\om'(\om(g,x),x) = g \quad\text{for all $g \in \F_{1+\kappa}$ and a.e.\ $x \in W_0$.}
\end{equation}
Once this claim is proven, it follows that $|\om(g,x)| = |g|$ for all $g \in \F_{1+\kappa}$ and a.e.\ $x \in W_0$~: indeed, the strict inequality $|\om(g,x)| < |g|$ would lead to the contradiction
$$|g| = |\om'(\om(g,x),g)| \leq |\om(g,x)| < |g| \; .$$
First note that for $g = a^n$ formula \eqref{eq.inverse} follows immediately from \eqref{eq.sven}. So it remains to prove \eqref{eq.inverse} when $g = b_i$. First observe that $\om'(\vphi_i(x),x) = e$ for a.e.\ $x \in W_0$. Indeed,
$$\om'(\vphi_i(x),x) * x = q(\vphi_i(x) \cdot x) = q(\theta_i(x)) = x$$
and since the $*$-action of $\langle a \rangle$ on $W_0$ is essentially free, it follows that $\om'(\vphi_i(x),x) = e$. Similarly, $\om'(\psi_i(x),x) = e$ for a.e.\ $x \in W_i$. Take $x \in W_0$ and write $x' := b \vphi_i(x) \cdot x$. Note that $x' = b \cdot \theta_i(x)$ and that $x' \in W_{i+1}$. So,
$$\om'(\om(b_i,x),x) = \om'(\psi_{i+1}(x') \, b \, \vphi_i(x) , x) = \om'(\psi_{i+1}(x'), x') \, \om'(b, \theta_i(x)) \, \om'(\vphi_i(x),x) = e \, b_i \, e = b_i \; .$$
So \eqref{eq.inverse} holds for $g = a^n$ and $g = b_i$. Hence \eqref{eq.inverse} holds for all $g \in \F_{1+\kappa}$. Note that \eqref{eq.inverse} implies that the action $\F_{1+\kappa} \overset{*}{\actson} W_0$ is essentially free. Indeed, if $g \in \F_{1+\kappa}$, $x \in W_0$ and $g * x = x$, it follows that $\om(g,x) \cdot x = x$. Since the $\cdot$-action is essentially free, we conclude that $\om(g,x) = e$. But then by \eqref{eq.inverse}
$$g = \om'(\om(g,x),x) = \om'(e,x) = e \; .$$

Define the subsets $\cC(n) \subset \langle b \rangle \backslash \F_2$ given by $\cC(n) := \{\langle b \rangle g \mid g \in \F_2, |g| \leq n\}$. Also define $\cJ_n := \{g \in \cJ \mid |g| \leq n\}$. We prove by induction on $n$ that the following two statements hold.
\begin{itemize}
\item[$1_n.$] If $g \in \F_{1+\kappa}$ and $|g| \leq n$, then $x \mapsto \om(g,x)$ only depends on the coordinates $x_i$, $i \in \cC(n)$.
\item[$2_n.$] The set $\{W_0 \recht V_0 \mid x \mapsto \rho(g * x) \mid g \in \cJ_n\}$ is a family of independent random variables that only depend on the coordinates $x_i$, $i \in \cC(n)$.
\end{itemize}

Since $e$ is the only element in $\cJ$ of length $0$, statements $1_0$ and $2_0$ are trivial. Assume that statements $1_n$ and $2_n$ hold for a given $n$.

Any element in $\F_{1+\kappa}$ of length $n+1$ can be written as a product $gh$ with $|g| = 1$ and $|h| = n$. By the cocycle equality, we have
$$\om(gh,x) = \om(g,h*x) \, \om(h,x) = \om(g, \om(h,x) \cdot x) \, \om(h,x) \; .$$
By statement $1_n$, we know that the map $x \mapsto \om(g,x)$ only depends on the coordinates $x_i$, $i \in \cC(1)$, and that the map $x \mapsto \om(h,x)$ only depends on on the coordinates $x_i$, $i \in \cC(n)$. So, $x \mapsto \om(gh,x)$ only depends on the coordinates $x_i$, $i \in \cC(n)$, and the map
$$x \mapsto (\om(h,x) \cdot x)_{\langle b \rangle k} = r(\om(h,x)) \cdot x_{\langle b \rangle k \om(h,x)} \quad\text{for}\;\; |k| \leq 1 \; .$$
Again by statement $1_n$ these maps only depend on the coordinates $x_i$, $i \in \cC(n+1)$, so that statement $1_{n+1}$ is proven.

Define, for $i = 0,\ldots,\kappa-1$ and $\eps = \pm 1$,
$$\cJ_n^{i,\eps} := \bigl\{g \in \F_{1 + \kappa} \; \big| \; |g| = n \;\;\text{and}\;\; |b_i^{\eps} g| = n+1 \bigr\} \; .$$
It follows that
$$\cJ_{n+1} = \cJ_n \cup \bigcup_{i \in \{0,\ldots,\kappa-1\}, \eps \in \{\pm 1\}} b_i^\eps \, \cJ_n^{i,\eps} \;.$$
Since we assumed that statement $2_n$ holds, in order to prove statement $2_{n+1}$, it suffices to show that
$$\{x \mapsto \rho(b_i^\eps g * x) \mid i=0,\ldots,\kappa-1, \eps = \pm 1, g \in \cJ_n^{i,\eps}\}$$
is a family of independent random variables that only depend on the coordinates $x_i$, $i \in \cC(n+1)$, and that are independent of the coordinates $x_i$, $i \in \cC(n)$.

Note that $\rho(b_i g * x) = \al_{i+1}^{-1} (\rho(b \cdot \theta_i(g*x)))$ while $\rho(b_i^{-1} g * x) = \al_i^{-1}(\rho(b^{-1} \cdot \theta_{i+1}(g*x)))$.
The value of $\rho(b \cdot \theta_i(g*x))$ at $0$ is constantly equal to $i+1$, while the value of $\rho(b^{-1} \cdot \theta_{i+1}(g*x))$ at $0$ is constantly equal to $i$. Therefore we have to prove that
\begin{equation}\label{eq.variables}
\begin{split}
\{x \mapsto \rho & (b \cdot  \theta_i(g*x))_m  \mid i=0,\ldots,\kappa-1 , g \in \cJ_n^{i,+} , m \in \Z - \{0\} \} \\ & \cup
\{x \mapsto \rho(b^{-1} \cdot \theta_{i+1}(g*x))_m \mid i=0,\ldots,\kappa-1 , g \in \cJ_n^{i,-} , m \in \Z - \{0\} \}
\end{split}
\end{equation}
is a family of independent random variables that only depend on the coordinates $x_i$, $i \in \cC(n+1)$, and that are independent of the coordinates $x_i$, $i \in \cC(n)$.

Write
$$\om_i^\eps(g,x) := \begin{cases} b \, \vphi_i(g * x) \, \om(g,x) &\quad\text{if $\eps = 1$,} \\
b^{-1} \, \vphi_{i+1}(g*x) \, \om(g,x) &\quad\text{if $\eps = -1$.}\end{cases}$$
The random variables in \eqref{eq.variables} are precisely equal to
\begin{equation}\label{eq.variablester}
\{x \mapsto r(\om_i^\eps(g,x)) \cdot x_{\langle b \rangle a^m \om_i^\eps(g,x)} \mid i=0,\ldots,\kappa-1 , \eps = \pm 1 , g \in \cJ_n^{i,\eps} , m \in \Z - \{0\} \} \; .
\end{equation}
So we have to prove that \eqref{eq.variablester} is a family of independent random variables that only depend on the coordinates $x_i$, $i \in \cC(n+1)$, and that are independent of the coordinates $x_i$, $i \in \cC(n)$. By statement $1_n$, the maps $x \mapsto \om_i^\eps(g,x)$, and in particular $x \mapsto r(\om_i^\eps(g,x))$, only depend on the coordinates $x_i$, $i \in \cC(n)$. So, we have to prove that
\begin{equation}\label{eq.variablesbis}
\{x \mapsto x_{\langle b \rangle a^m \om_i^\eps(g,x)} \mid i=0,\ldots,\kappa-1 , \eps = \pm 1 , g \in \cJ_n^{i,\eps} , m \in \Z - \{0\} \} \; .
\end{equation}
is a family of independent random variables that only depend on the coordinates $x_i$, $i \in \cC(n+1)$, and that are independent of the coordinates $x_i$, $i \in \cC(n)$.

We apply Lemma \ref{lem.indep} to the countable set $\cC(n+1) - \cC(n)$ and the direct product
$$X_0^{\cC(n)} \times X_0^{\cC(n+1)-\cC(n)} \; .$$
Since the maps $x \mapsto \om_i^\eps(g,x)$ only depend on the coordinates $x_i$, $i \in \cC(n)$, it remains to check that the cosets $\langle b \rangle a^m \om_i^\eps(g,x)$ belong to $\cC(n+1) - \cC(n)$ and that they are distinct for fixed $x \in W_0$ and varying $i \in \{0,\ldots,\kappa-1\}$, $\eps \in \{\pm 1\}$ and $g \in \cJ_n^{i,\eps}$.

Note that $\om(b_i^\eps g,x) \in \langle a \rangle \, \om_i^\eps(g,x)$. Hence,
$$|\om_i^\eps(g,x)| = |\om(b_i^\eps g,x)| = |b_i^\eps g| = n+1$$
because $g \in \cJ_n^{i,\eps}$. Since $|\om(g,x)| = n$ and $|\om_i^\eps(g,x)| = n+1$, it follows from the defining formula of $\om_i^\eps$ that the first letter of $\om_i^\eps(g,x)$ must be $b^\eps$. So the first letter of $a^m \om_i^\eps(g,x)$, $m \neq 0$, is $a^{\pm 1}$. This implies that $\langle b \rangle a^m \om_i^\eps(g,x)$ belongs to $\cC(n+1) - \cC(n)$. It also follows that if
$$\langle b \rangle a^m \om_i^{\eps}(g,x) = \langle b \rangle a^{m'} \om_{i'}^{\eps'}(g',x) \; ,$$
then we must have $m = m'$, $\eps = \eps'$ and $\om_i^\eps(g,x) = \om_{i'}^{\eps'}(g',x)$. Assume $\eps = \eps' = 1$, the other case being analogous. So,
$$\vphi_i(g*x) \, \om(g,x) = \vphi_{i'}(g',x) \, \om(g',x) \; .$$
Applying these elements to $x$, it follows that $\theta_i(g*x) = \theta_{i'}(g'*x)$. Since the ranges of $\theta_i$ and $\theta_{i'}$ are disjoint for $i \neq i'$, it follows that $i=i'$. So, $g*x = g'*x$. Since we have seen above that the action $\F_{1+\kappa} \overset{*}{\actson} W_0$ is essentially free, it follows that $g=g'$.

We have proven that \eqref{eq.variablester} is a family of independent random variables that only depend on the coordinates $x_i$, $i \in \cC(n+1)$, and that are independent of the coordinates $x_i$, $i \in \cC(n)$. So, statement $2_{n+1}$ holds.

To conclude the proof of the lemma, it remains to show that the random variables $x \mapsto \rho(g * x)$, $g \in \F_{1+\kappa}$, generate up to null sets the whole $\si$-algebra of $W_0$. Denote by $\cB_0$ the $\sigma$-algebra on $W_0$ generated by these random variables. By construction, $x \mapsto g*x$ is $\cB_0$-measurable for every $g \in \F_{1+\kappa}$. Since $x \mapsto \rho(x)$ is $\cB_0$-measurable, the formula
$$q(a^n \cdot x) = a^{\eta'(n,\rho(x))} * x$$
shows that $x \mapsto q(a^n \cdot x)$ is $\cB_0$-measurable for every $n \in \Z$. Denote by $\cB_1$ the smallest $\sigma$-algebra on $X$ containing $\cB_0$, containing the subsets $W_0,\ldots,W_{\kappa-1} \subset X$ and making $q : X \recht W_0$ a $\cB_1$-measurable map. Note that the restriction of $\cB_1$ to $W_0$ equals $\cB_0$ and that $\cU \subset X$ is $\cB_1$-measurable if and only if $q(\cU \cap W_i)$ is $\cB_0$-measurable for every $i=0,\ldots,\kappa-1$. It therefore suffices to prove that $\cB_1$ is the whole $\si$-algebra of $X$. By construction, $\rho : X \recht Z$ is $\cB_1$-measurable and by the above, also $x \mapsto a^n \cdot x$ is $\cB_1$-measurable for every $n \in \Z$. If $x \in W_i$, we have that $b \cdot x = \theta_{i+1}^{-1}(b_i * \theta_i(x))$ and it follows that $x \mapsto b \cdot x$ is $\cB_1$-measurable. Hence, $x \mapsto g \cdot x$ is $\cB_1$-measurable for every $g \in \F_2$. Since $\rho$ is $\cB_1$-measurable, it follows that $x \mapsto x_{\langle b \rangle g}$ is $\cB_1$-measurable for every $g \in \F_2$. Hence $\cB_1$ is the entire product $\si$-algebra.
\end{proof}

\subsubsection*{Proof of Lemma \ref{lem.drie}}

\begin{proof}[Proof of Lemma \ref{lem.drie}]
We denote by a dot $\cdot$ the action of $\Gamma$ on $X$. Let $X_1 \subset X$ be a subset of measure $1/\kappa$. We are given a p.m.p.\ action $\Lambda \overset{*}{\actson} X_1$ such that $\Lambda * x = \Gamma \cdot x \cap X_1$ for a.e.\ $x \in X_1$ and such that $\Lambda \actson X_1$ is isomorphic with a $\Lambda$-Bernoulli action. This means that we have a probability space $U$ and a factor map $\pi : X_1 \recht U$
such that the random variables $\{x \mapsto \pi(\lambda * x) \mid \lambda \in \Lambda\}$ are independent, identically distributed and generating the Borel $\sigma$-algebra of $X_1$. Denote by $\om : \Lambda \times X_1 \recht \Gamma$ the $1$-cocycle determined by $\om(\lambda,x) \cdot x = \lambda * x$ for all $\lambda \in \Lambda$ and a.e.\ $x \in X_1$. Put $Y = Y_0^\Gamma$ and define the action $\Lambda \actson X_1 \times Y$ given by
$$\lambda * (x,y) = \om(\lambda,x) \cdot (x,y) = (\lambda * x, \om(\lambda,x) \cdot y) \; .$$
By construction, $\Lambda * (x,y) \subset \Gamma \cdot (x,y) \cap X_1 \times Y$. But also the converse inclusion holds. Indeed, if we have $\gamma \in \Gamma$, $x \in X_1$ and $y \in Y$ such that $\gamma \cdot x \in X_1$, we can take $\lambda \in \Lambda$ such that $\lambda * x = \gamma \cdot x$. Hence $\om(\lambda,x) = \gamma$ and also $\gamma \cdot (x,y) = \lambda * (x,y)$.

It remains to prove that $\Lambda \actson X_1 \times Y$ is isomorphic with a $\Lambda$-Bernoulli action.

By ergodicity of $\Gamma \actson X$, choose a partition (up to measure zero) $X = X_1 \sqcup \cdots \sqcup X_\kappa$ with $\mu(X_i) = 1/\kappa$ and choose measurable maps $\vphi_i : X_1 \recht \Gamma$
such that the formulae $\theta_i(x) = \vphi_i(x) \cdot x$ define measure space isomorphisms $\theta_i : X_1 \recht X_i$. Take $\vphi_1(x) = e$ for all $x \in X_1$. Define the measurable map
$$\rho : X_1 \times Y \recht U \times Y_0^\kappa : \rho(x,y) = (\pi(x),y_{\vphi_1(x)},\ldots,y_{\vphi_\kappa(x)}) \; .$$
We prove that $\rho$ is measure preserving and that the random variables $\{(x,y) \mapsto \rho(\lambda*(x,y)) \mid \lambda \in \Lambda\}$ are independent, identically distributed and generating the Borel $\sigma$-algebra of $X_1 \times Y$.

We first claim that for a.e.\ $x \in X_1$
\begin{equation}\label{eq.F}
\cF := \Bigl(\vphi_i(\lambda * x) \om(\lambda,x)\Bigr)_{\lambda \in \Lambda \;\text{and}\; i=1,\ldots,\kappa}
\end{equation}
is an enumeration of $\Gamma$ without repetitions. Observe that
$$
\vphi_i(\lambda * x) \om(\lambda,x) \cdot x = \theta_i(\lambda * x) \; .
$$
It follows that $\cF \cdot x = \Gamma \cdot x$. Since $\Gamma \actson X$ is essentially free, it follows that $\cF$ enumerates the whole of $\Gamma$. If $\vphi_i(\lambda * x) \om(\lambda,x) = \vphi_j(\lambda' * x) \om(\lambda',x)$, it follows that $\theta_i(\lambda * x) = \theta_j(\lambda' * x)$. For $i \neq j$, the sets $X_i$ and $X_j$ are disjoint. So, $i=j$ and $\lambda * x = \lambda' * x$. Being a Bernoulli action of an infinite group, $\Lambda \overset{*}{\actson} X_1$ is essentially free and we conclude that $\lambda = \lambda'$. This proves the claim.

Since for a.e.\ $x \in X_1$ the elements $\vphi_1(x),\ldots,\vphi_\kappa(x)$ are distinct, it follows from Lemma \ref{lem.indep} that the random variables $(x,y) \mapsto \pi(x)$ and $(x,y) \mapsto y_{\vphi_i(x)}$, $i=1,\ldots,\kappa$, are all independent. Since they are all measure preserving as well, we conclude that
$\rho$ is measure preserving. Note that
$$\rho(\lambda * (x,y)) = \bigl(\pi(\lambda * x),y_{\vphi_1(\lambda * x) \om(\lambda,x)},\ldots,y_{\vphi_\kappa(\lambda * x) \om(\lambda,x)}\bigr) \; .$$
It therefore remains to prove that
$$
\{(x,y) \mapsto \pi(\lambda * x) \mid \lambda \in \Lambda\} \cup \{(x,y) \mapsto y_{\vphi_i(\lambda * x) \om(\lambda,x)} \mid \lambda \in \Lambda, i = 1,\ldots,\kappa\}
$$
is an independent family of random variables that generate, up to null sets, the Borel $\sigma$-algebra of $X_1 \times Y$.
The factor map $\pi$ was chosen in such a way that the random variables $\{ x \mapsto \pi(\lambda * x) \mid \lambda \in \Lambda\}$ are independent and generate, up to null sets, the Borel $\sigma$-algebra of $X_1$. So, we must prove that
\begin{equation}\label{eq.onzefamilie}
\{(x,y) \mapsto y_{\vphi_i(\lambda * x) \om(\lambda,x)} \mid \lambda \in \Lambda, i = 1,\ldots,\kappa\}
\end{equation}
forms a family of independent random variables that are independent of $(x,y) \mapsto x$ and that, together with $(x,y) \mapsto x$, generate up to null sets the Borel $\sigma$-algebra of $X_1 \times Y$. We apply Lemma \ref{lem.indep} to the countable set $\Gamma$, the direct product $X_1 \times Y_0^\Gamma$ and the family of maps $X_1 \recht \Gamma : x \mapsto \vphi_i(\lambda * x) \om(\lambda,x)$ indexed by $\lambda \in \Lambda, i = 1,\ldots,\kappa$. Since for a.e.\ $x \in X_1$, the set $\cF$ in \eqref{eq.F} is an enumeration of $\Gamma$, it follows from Lemma \ref{lem.indep} that \eqref{eq.onzefamilie} is indeed a family of independent random variables that are moreover independent of $(x,y) \mapsto x$.

Denote by $\cB_1$ the smallest $\sigma$-algebra on $X_1 \times Y$ such that the map $(x,y) \mapsto x$ and the random variables in \eqref{eq.onzefamilie} are measurable. It remains to prove that, up to null sets, $\cB_1$ is the Borel $\sigma$-algebra of $X_1 \times Y$. So, it remains to prove that for all $g \in \Gamma$, the random variables $(x,y) \mapsto y_g$ are $\cB_1$-measurable.
Put $\cJ = \{1,\ldots,\kappa\} \times \Lambda$ and define the Borel map $\eta : \cJ \times X_1 \recht \Gamma$ given by $\eta(i,\lambda,x) := \vphi_i(\lambda * x) \om(\lambda,x)$. Since for a.e.\ $x \in X_1$, the family $\cF$ in \eqref{eq.F} is an enumeration of $\Gamma$, we can take a Borel map $\gamma : \Gamma \times X_1 \recht \cJ$ such that $\eta(\gamma(g,x),x) = g$ for all $g \in \Gamma$ and a.e.\ $x \in X_1$. By the definition of $\cB_1$ and $\eta$, we know that the map
\begin{equation}\label{eq.eenmap}
\cJ \times X_1 \times Y \recht Y_0 : (j,x,y) \mapsto y_{\eta(j,x)}
\end{equation}
is $\cB_1$-measurable. Fix $g \in \Gamma$. Since $(x,y) \mapsto x$ is $\cB_1$-measurable, also $(x,y) \mapsto (\gamma(g,x),x,y)$ is $\cB_1$-measurable. The composition with the map in \eqref{eq.eenmap} yields $(x,y) \mapsto y_g$ a.e. So $(x,y) \mapsto y_g$ is $\cB_1$-measurable. This concludes the proof of the lemma.
\end{proof}

\section{Isomorphisms of factors of Bernoulli actions of free products}

Before proving Theorem \ref{thm.B}, we need the following elementary lemma.

\begin{lemma}\label{lem.factor}
Let $\Gamma,\Lambda$ be countable groups and $K$ a nontrivial second countable compact group equipped with its normalized Haar measure. Consider the action $(\Gamma * \Lambda) \times K \actson X := K^{\Gamma \backslash \Gamma * \Lambda}$ where $\Gamma * \Lambda$ shifts the indices and $K$ acts by diagonal left translation. The resulting factor action $\Gamma * \Lambda \actson X/K$ is isomorphic with the co-induced action of $\Lambda \actson K^\Lambda / K$ to $\Gamma * \Lambda$.
\end{lemma}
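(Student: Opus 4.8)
The plan is to verify for the factor action $G \actson X/K$, where $G := \Gamma * \Lambda$ and $X := K^{\Gamma\backslash G}$, the abstract characterization of the co-induced action of $\Lambda \actson K^\Lambda/K$ to $G$ established just before Remark~\ref{rem.bete}; note that this factor action is a well-defined p.m.p.\ action because the index shift and the diagonal $K$-action on $X$ commute and $K$ is compact. To build the required factor map, observe that $\Gamma \cap \Lambda = \{e\}$ inside the free product, so the cosets $\Gamma\lambda$ ($\lambda \in \Lambda$) are pairwise distinct, and the coordinate projection $\tilde\rho : K^{\Gamma\backslash G} \recht K^\Lambda$, $\tilde\rho(x)_\lambda = x_{\Gamma\lambda}$, is a well-defined measure preserving map that is $K$-equivariant for the two diagonal $K$-actions; it therefore descends to a measure preserving map $\rho : X/K \recht K^\Lambda/K$. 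From $(g\cdot x)_{\Gamma h} = x_{\Gamma hg}$ one reads off $\tilde\rho(g\cdot x) = (x_{\Gamma\lambda g})_{\lambda\in\Lambda}$; in particular $\tilde\rho(\mu\cdot x)_\lambda = x_{\Gamma\lambda\mu} = \tilde\rho(x)_{\lambda\mu}$ for $\mu\in\Lambda$, so $\tilde\rho$ intertwines the shift with the Bernoulli action $\Lambda\actson K^\Lambda$, and passing to $K$-quotients gives $\rho(\mu\cdot\bar x) = \mu\cdot\rho(\bar x)$, which is property~1 of the characterization.

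For the independence property~3 I would take the right transversal $I\subset G$ of $\Lambda<G$ consisting of $e$ together with all reduced words whose leftmost syllable lies in $\Gamma-\{e\}$, graded by the number $|g|$ of $\Gamma$-syllables as $I = \bigsqcup_n I_n$ (the same transversal used in the proof of Theorem~\ref{thm.coinduced}). Identifying $K^\Lambda/K$ with $\{v\in K^\Lambda : v_e = e\}$ by dividing out the coordinate at $e$, the variable $\rho(g\cdot\bar x)$ becomes the family of ratios $(x_{\Gamma g}^{-1}x_{\Gamma\lambda g})_{\lambda\in\Lambda}$. The combinatorial heart of the matter is that for $g\in I_n$ the reference coset $\Gamma g$ has only $n-1$ $\Gamma$-syllables (the leading $\Gamma$-syllable of $g$ is absorbed), whereas for $\lambda\neq e$ the coset $\Gamma\lambda g$ has exactly $n$ $\Gamma$-syllables, and the cosets $\{\Gamma\lambda g : g\in I_n,\ \lambda\in\Lambda-\{e\}\}$ are pairwise distinct (from $\lambda g = \lambda' g'$ with $g,g'$ starting with a $\Gamma$-syllable one gets $\lambda=\lambda'$, $g=g'$). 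Hence, conditioning on the coordinates $x_c$ with $c$ having at most $n-1$ $\Gamma$-syllables turns every reference $x_{\Gamma g}$, $g\in I_n$, into a constant and leaves the $x_{\Gamma\lambda g}$, $\lambda\neq e$, as distinct, fresh, independent Haar-distributed coordinates; applying Lemma~\ref{lem.indep} to the left translates $x_{\Gamma g}^{-1}x_{\Gamma\lambda g}$ shows that $\{\rho(g\cdot\bar x) : g\in I_n\}$ is an i.i.d.\ family with the law of $K^\Lambda/K$, independent of all coordinates of level $\leq n-1$ and hence of $\{\rho(g\cdot\bar x) : g\in I_0\cup\cdots\cup I_{n-1}\}$. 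Running this over $n$ gives the joint independence of $\{\rho(g\cdot\bar x) : g\in I\}$ demanded by property~3.

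Property~2 amounts to showing that the maps $\bar x\mapsto\rho(g\cdot\bar x)$, $g\in G$, generate up to null sets the $K$-invariant $\sigma$-algebra of $X = K^{\Gamma\backslash G}$, which is generated by the ratios $x_c^{-1}x_{c'}$, $c,c'\in\Gamma\backslash G$. Since $\rho(g\cdot\bar x)$ records exactly the ratios $x_c^{-1}x_{c'}$ with $c,c'\in\{\Gamma\lambda g : \lambda\in\Lambda\}$ for a common $g$, it suffices to see that the graph on $\Gamma\backslash G$ with an edge between $c$ and $c'$ whenever they lie in such a common set is connected. I would check this by peeling: writing a coset as $\Gamma h$, absorb a possible leading $\Gamma$-syllable of $h$; if what remains starts with a $\Lambda$-syllable $\mu$, say $h=\mu h_0$, then $\Gamma h = \Gamma\mu h_0$ and $\Gamma h_0$ both belong to $\{\Gamma\lambda h_0 : \lambda\in\Lambda\}$, so $\Gamma h$ is joined to the strictly shorter coset $\Gamma h_0$, and iterating reaches $\Gamma e$. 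Chaining ratios along such paths recovers all ratios $x_c^{-1}x_{c'}$, establishing property~2. With properties 1--3 in hand, the characterization identifies $X/K$ with the co-induced action of $\Lambda\actson K^\Lambda/K$ to $G$, as claimed.

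The main obstacle is precisely the reduced-word bookkeeping in the free product that underlies properties 2 and 3 — the statement that for $g\in I_n$ the reference coordinate already lies in level $n-1$ while the remaining coordinates are fresh and mutually distinct, and the connectedness of the ratio graph — everything else being a formal application of the characterization of co-induced actions together with Lemma~\ref{lem.indep}.
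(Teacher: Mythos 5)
Your proposal is correct and follows essentially the same route as the paper: the same coordinate factor map $\rho(x)_\lambda = x_{\Gamma\lambda}$, the same transversal graded by the number of $\Gamma$-syllables, the same reduction of $\rho(g\cdot\bar x)$ to the ratios $x_{\Gamma g}^{-1}x_{\Gamma\lambda g}$ with $|\Gamma g|=n-1$ and $|\Gamma\lambda g|=n$ feeding into Lemma~\ref{lem.indep}, and the same peeling induction for generation of the $\sigma$-algebra.
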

\begin{proof}
Define the factor map $\rho : K^{\Gamma \backslash \Gamma * \Lambda} \recht K^\Lambda$ given by $\rho(x)_\lambda = x_{\Gamma \lambda}$. Note that $\rho$ is $(\Lambda \times K)$-equivariant. Denote $X := K^{\Gamma \backslash \Gamma * \Lambda}$ and denote by $x \mapsto \xb$ the factor map of $X$ onto $X/K$. So we get the $\Lambda$-equivariant factor map $\rhob : X/K \recht K^\Lambda/K : \rhob(\xb) = \overline{\rho(x)}$. We prove that $\Gamma * \Lambda \actson X/K$ together with $\rhob$ satisfies the abstract characterization of the co-induced action of $\Lambda \actson K^\Lambda/K$ to $\Gamma * \Lambda$.

For $g \in \Gamma * \Lambda$, denote by $|g|$ the number of letters from $\Gamma - \{e\}$ appearing in a reduced expression for $g$. Define the subsets $I_n \subset \Gamma * \Lambda$ given by $I_0 := \{e\}$ and
$$I_n := \bigl\{g \in \Gamma * \Lambda \;\big| \; |g| = n \;\;\text{and the reduced expression of $g$ starts with a letter from $\Gamma - \{e\}$} \; \bigr\} \; .$$
Note that $\bigcup_{n=0}^\infty I_n$ is a right transversal for $\Lambda < \Gamma * \Lambda$. So we have to prove that
\begin{equation}\label{eq.mynicefam}
\{\xb \mapsto \rhob(g \cdot \xb) \mid n \in \N , g \in I_n \}
\end{equation}
is a family of independent random variables that generate, up to null sets, the whole $\sigma$-algebra of $X/K$.

For $i \in \Gamma \backslash \Gamma * \Lambda$, we write $|i|=n$ if $i = \Gamma g$, where $|g|=n$ and the reduced expression for $g$ starts with a letter from $\Lambda - \{e\}$. For every $\lambda \in \Lambda - \{e\}$, define $\theta_\lambda : K^\Lambda/K \recht K : \theta_\lambda(\xb) = x_e^{-1} x_\lambda$. Observe that for all $g \in I_n$ and $\lambda \in \Lambda-\{e\}$, we have
\begin{equation}\label{eq.consequence}
\theta_\lambda(\rhob(g \cdot \xb)) = x_{\Gamma g}^{-1} \, x_{\Gamma \lambda g} \; .
\end{equation}
Since $g \in I_n$ starts with a letter from $\Gamma-\{e\}$, we have $|\Gamma \lambda g| = |g| = n$, while $|\Gamma g| = n-1$.
Write $\cI_n := \{i \in \Gamma \backslash \Gamma * \Lambda \mid |i| \leq n\}$. We apply Lemma \ref{lem.indep} to the countable set $\cI_n - \cI_{n-1}$, the direct product
$$Z := K^{\cI_{n-1}} \times K^{\cI_n - \cI_{n-1}}$$
and the family of maps $\om_{g,\lambda} : Z \recht K \times (\cI_n - \cI_{n-1})$, indexed by $g \in I_n, \lambda \in \Lambda - \{e\}$, only depending on the coordinates $x_i$, $i \in \cI_{n-1}$, and given by
$$\om_{g,\lambda} : x \mapsto (x_{\Gamma g}^{-1}, \Gamma \lambda g) \; .$$
Since the elements $\Gamma \lambda g$, for $g \in I_n,\lambda \in \Lambda - \{e\}$, enumerate $\cI_n - \cI_{n-1}$, it follows from Lemma \ref{lem.indep} that the random variables
$$\{X \recht K : x \mapsto x_{\Gamma g}^{-1} x_{\Gamma \lambda g} \mid g \in I_n , \lambda \in \Lambda - \{e\} \; \}$$
are independent, only depend on the coordinates $x_i$, $|i| \leq n$, and are independent of the coordinates $x_i$, $|i| \leq n-1$. In combination with \eqref{eq.consequence}, it follows that \eqref{eq.mynicefam} is indeed a family of independent random variables.

Denote by $\cB_0$ the smallest $\sigma$-algebra on $X/K$ for which all the functions $\xb \mapsto \rhob(g \cdot \xb)$, $g \in \Gamma * \Lambda$, are $\cB_0$-measurable.
Formula \eqref{eq.consequence} and an induction on $n$ show that $\xb \mapsto x_{\Gamma e}^{-1} \, x_i$ is $\cB_0$-measurable for every $i \in \Gamma \backslash \Gamma * \Lambda$ with $|i|\leq n$. Hence, $\cB_0$ is the entire $\sigma$-algebra on $X/K$.
\end{proof}

Theorem \ref{thm.B} will be an immediate corollary of the following general result.

\begin{theorem}\label{thm.stability}
Let $\Gamma_i$, $i=0,1$, be countable groups and $K$ a nontrivial second countable compact group equipped with its normalized Haar measure. Assume that $\Gamma_i \actson K^{\Gamma_i} / K$ is isomorphic with the Bernoulli action $\Gamma_i \actson Y_i^{\Gamma_i}$ with base space $(Y_i,\mu_i)$. Write $G := \Gamma_0 * \Gamma_1$. Then $G \actson K^{G}/K$ is isomorphic with the Bernoulli action $G \actson (Y_0 \times Y_1)^{G}$ with base space $Y_0 \times Y_1$.
\end{theorem}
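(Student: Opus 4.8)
The strategy is to use the abstract characterization of Bernoulli actions (the one recalled just before Lemma~\ref{lem.een}): $G \actson Z$ is a Bernoulli action with base $(Y_0 \times Y_1)$ if and only if there is a factor map $\pi : Z \recht Y_0 \times Y_1$ such that the translates $z \mapsto \pi(g \cdot z)$, $g \in G$, are independent and generate the $\sigma$-algebra of $Z$ up to null sets. So I set $Z := K^G/K$ and I want to manufacture such a $\pi$. The natural idea is to split the coordinates of $K^G$ according to the two free factors. Concretely, fix right transversals $I_i$ of $\Gamma_i < G$ built from reduced-word combinatorics (exactly as in the proof of Lemma~\ref{lem.factor}: $I_i$ consists of $e$ together with reduced words whose leftmost letter lies outside $\Gamma_i$), so that $G = \Gamma_i \cdot I_i$ and every coset in $\Gamma_i \backslash G$ has a unique representative in $I_i$.

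\textbf{Step 1: realize $K^G/K$ as a co-induction, twice over.} By Lemma~\ref{lem.factor} applied with $\Lambda = \Gamma_1$ and $\Gamma = \Gamma_0$, the action $G \actson K^G/K$ is isomorphic with the co-induction of $\Gamma_1 \actson K^{\Gamma_1}/K$ from $\Gamma_1$ to $G$; by hypothesis the latter is the Bernoulli action $\Gamma_1 \actson Y_1^{\Gamma_1}$, so by Remark~\ref{rem.bete}.1 its co-induction to $G$ is the Bernoulli action $G \actson Y_1^G$. Thus $G \actson K^G/K \cong G \actson Y_1^G$, and symmetrically $\cong G \actson Y_0^G$. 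This already shows $K^G/K$ is \emph{a} Bernoulli action of $G$; the content of the theorem is pinning down the base space as $Y_0 \times Y_1$. A bare appeal to Bowen's entropy invariant is not available here in the elementary spirit of the paper (and anyway does not identify the base), so I will instead build the factor map to $Y_0 \times Y_1$ by hand.

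\textbf{Step 2: build the factor map.} Write $\pi_i : K^{\Gamma_i}/K \recht Y_i$ for the base factor map of the given Bernoulli isomorphism. Using the $(\Gamma_i \times K)$-equivariant factor map $\rho_i : K^G \recht K^{\Gamma_i}$, $\rho_i(x)_\lambda = x_{\Gamma_i \lambda}$ from Lemma~\ref{lem.factor}, one gets a $\Gamma_i$-equivariant map $\rhob_i : K^G/K \recht K^{\Gamma_i}/K$, and then $\tau_i := \pi_i \circ \rhob_i : K^G/K \recht Y_i$. Define
\begin{equation*}
\pi : K^G/K \recht Y_0 \times Y_1 : \pi(\xb) = (\tau_0(\xb), \tau_1(\xb)) \; .
\end{equation*}
The claim is that the translates $\xb \mapsto \pi(g \cdot \xb)$, $g \in G$, are independent and generating. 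For \emph{independence}: the family $\{\xb \mapsto \tau_i(g \cdot \xb) \mid g \in I_i\}$ is, by the co-induction characterization used in the proof of Lemma~\ref{lem.factor}, an i.i.d.\ family; one must check that the $\Gamma_0$-indexed block and the $\Gamma_1$-indexed block are moreover independent of each other, and that inside each block the whole $G$-orbit $\{g \cdot \xb \mid g \in G\}$ collapses to the transversal block via the $\Gamma_i$-equivariance of $\tau_i$ (since $\tau_i(\lambda g \cdot \xb) = \lambda \cdot \tau_i(g\cdot\xb)$ only permutes — here, as $Y_i$ is a point-set, actually fixes — nothing extra). The cross-independence should reduce, via a length-stratified version of Lemma~\ref{lem.indep} exactly as in Lemma~\ref{lem.factor}, to the fact that $\tau_0(g\cdot\xb)$ depends only on the coordinates $x_i$ with the leftmost letter of a representative of $i$ in $\Gamma_0$, while $\tau_1(g\cdot\xb)$ depends only on those with leftmost letter in $\Gamma_1$ — disjoint coordinate sets in $K^G$ up to the one overlap at $x_e$, which is killed by passing to $K^G/K$ (the $K$-quotient only sees differences $x_i^{-1} x_j$, cf.\ \eqref{eq.consequence}). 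For \emph{generation}: let $\cB_0$ be the $\sigma$-algebra generated by all $\xb \mapsto \pi(g\cdot\xb)$. Since $\tau_i = \pi_i \circ \rhob_i$ and the translates of $\pi_i$ generate the $\sigma$-algebra of $K^{\Gamma_i}/K$ (Bernoulli hypothesis), $\cB_0$ contains the pullback under $\rhob_i$ of the full $\sigma$-algebra of $K^{\Gamma_i}/K$ for both $i$; an induction on word length $n$, feeding the two blocks against each other exactly as in the last paragraph of Lemma~\ref{lem.factor}, then shows every coordinate function $\xb \mapsto x_e^{-1} x_j$ is $\cB_0$-measurable, so $\cB_0$ is everything.

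\textbf{Where the difficulty lies.} The formal steps — equivariance, the orbit-to-transversal reduction, the final induction — are all routine given Lemmas~\ref{lem.factor} and~\ref{lem.indep}. The one genuinely delicate point is the \emph{mutual} independence of the $\Gamma_0$-block and the $\Gamma_1$-block of variables after quotienting by the diagonal $K$: in $K^G$ itself the two blocks share the single coordinate $x_e$, so they are \emph{not} independent upstairs, and one must verify that descending to $K^G/K$ genuinely decouples them. I expect this to require an explicit choice of a measurable section $\theta$ (as in Lemma~\ref{lem.compact-free}, or the $\theta_\lambda(\xb) = x_e^{-1}x_\lambda$ trick of Lemma~\ref{lem.factor}) so that $K^G/K \cong \prod_{j \neq e} K$ with coordinates $x_e^{-1} x_j$, after which the $\Gamma_0$- and $\Gamma_1$-blocks really do sit on disjoint coordinate sets and Lemma~\ref{lem.indep}, applied in the length-stratified manner of Lemma~\ref{lem.factor}, finishes the job. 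Once independence and generation are in hand, the abstract characterization of Bernoulli actions gives $G \actson K^G/K \cong G \actson (Y_0 \times Y_1)^G$ immediately.
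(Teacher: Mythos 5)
Your overall idea---exhibit the base factor map $\pi=(\pi_0\circ\rhob_0,\pi_1\circ\rhob_1):K^G/K\recht Y_0\times Y_1$ directly and verify the abstract Bernoulli characterization---is a legitimate alternative to the paper's argument (and in the case $\Gamma_i=\Z$, $\pi_i(\zb)=z_0^{-1}z_1$ it reproduces the explicit map used for $\F_n$ at the end of the proof of Theorem~\ref{thm.B}). However, as written there are two genuine gaps. First, Step~1 misapplies Lemma~\ref{lem.factor}: that lemma concerns the product $K^{\Gamma_0\backslash G}$ indexed by \emph{cosets}, not $K^G$, so it identifies $(K^{\Gamma_0\backslash G})/K$ (not $K^G/K$) with the co-induction of $\Gamma_1\actson K^{\Gamma_1}/K$. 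Your conclusion ``$K^G/K\cong Y_1^G$'' is false in general---it would contradict the very statement you are proving, whose base is $Y_0\times Y_1$. Indeed, the whole difficulty of the theorem is that dividing by the diagonal $K$ does \emph{not} visibly commute with co-induction; that is exactly what Theorem~\ref{thm.coinduced} is for.

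Second, and more seriously, the justification you give for the mutual independence of the $\Gamma_0$-block and the $\Gamma_1$-block is wrong. The variable $\tau_0(g\cdot\xb)$ depends on the coordinates $x_j$ with $j\in\Gamma_0g$, and $\tau_1(h\cdot\xb)$ on those with $j\in\Gamma_1h$; since $G=\bigcup_{g\in I_0}\Gamma_0g=\bigcup_{h\in I_1}\Gamma_1h$, \emph{every} coordinate of $K^G$ is used by both blocks, not just $x_e$, and switching to the coordinates $x_e^{-1}x_j$ does not disjointify them (e.g.\ $\tau_0(\xb)$ and $\tau_1(\gamma_0\cdot\xb)$ both see $x_e^{-1}x_{\gamma_0}$ for $\gamma_0\in\Gamma_0-\{e\}$). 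The independence is nevertheless true, but for a different reason: conditionally on all coordinates indexed by $\Gamma_0g$, the point $\overline{(x_{\mu h})_{\mu\in\Gamma_1}}\in K^{\Gamma_1}/K$ is still distributed according to the full quotient of Haar measure, because pinning one coordinate of $K^{\Gamma_1}$ and dividing by the diagonal translation action yields the same measure. Lemma~\ref{lem.indep} as stated does not give you this, since $\pi_1$ may depend on infinitely many coordinates and the variable is not of the form $\om_1(x)\cdot y_{\om_2(x)}$ with a single fresh coordinate; you would have to set up a length-stratified conditional argument from scratch, which is essentially the content of the proof of Theorem~\ref{thm.coinduced}. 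The paper avoids all of this by applying Theorem~\ref{thm.coinduced} to the two essentially free $(\Gamma_0\times K)$-actions on $A=K^{\Gamma_0}$ and $B=Y_0^{\Gamma_0}\times K$ (whose $K$-quotients are isomorphic by hypothesis), obtaining $K^G/K\cong Y_0^G\times\bigl(K^{\Gamma_0\backslash G}\bigr)/K$, and only then invoking Lemma~\ref{lem.factor} and Remark~\ref{rem.bete}.1 to identify the second factor with $Y_1^G$.
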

\begin{proof}
Put $A := K^{\Gamma_0}$ and denote by $\al$ the action $\Gamma_0 \times K \overset{\al}{\actson} A$ where $\Gamma_0$ shifts the indices and $K$ acts by diagonal left translation. Put $B := Y_0^{\Gamma_0} \times K$ and denote by $\beta$ the action $\Gamma_0 \times K \overset{\beta}{\actson} B$ where $\Gamma_0$ only acts on the factor $Y_0^{\Gamma_0}$ in a Bernoulli way and $K$ only acts on the factor $K$ by translation. Our assumptions say that $\Gamma_0 \actson A/K$ and $\Gamma_0 \actson B/K$ are isomorphic actions. We apply Theorem \ref{thm.coinduced} to these two actions of $\Gamma_0$.

So denote $G = \Gamma_0 * \Gamma_1$ and denote by $G \actson \Atil$, resp.\ $G \actson \Btil$, the co-induced actions of $\Gamma_0 \actson A$, resp.\ $\Gamma_0 \actson B$, to $G$. Note that these actions come together with natural actions $K \actson \Atil$ and $K \actson \Btil$ that commute with $G$-actions. By Theorem \ref{thm.coinduced}, the actions $G \actson \Atil / K$ and $G \actson \Btil / K$ are isomorphic.

We now identify the actions $G \times K \actson \Atil$ and $G \times K \actson \Btil$ with the following known actions. First, the action $G \times K \actson \Atil$ is canonically isomorphic with $G \times K \actson K^G$ where $G$ acts in a Bernoulli way and $K$ acts by diagonal left translation. Secondly, using Remark \ref{rem.bete}.3, the action $G \times K \actson \Btil$ is isomorphic with the action $G \times K \actson Y_0^{G} \times K^{\Gamma_0 \backslash G}$ where $G$ acts diagonally in a Bernoulli way and $K$ only acts on the second factor by diagonal left translation. In combination with the previous paragraph, it follows that the action $G \actson K^{G}/K$ is isomorphic with the diagonal action $G \actson Y_0^G \times (K^{\Gamma_0 \backslash G})/K$.

From Lemma \ref{lem.factor}, we know that $G \actson (K^{\Gamma_0 \backslash G})/K$ is isomorphic with the co-induced action of $\Gamma_1 \actson K^{\Gamma_1}/K$ to $G$. Since we assumed that $\Gamma_1 \actson K^{\Gamma_1}/K$ is isomorphic with the Bernoulli action $\Gamma_1 \actson Y_1^{\Gamma_1}$, it follows that
$G \actson (K^{\Gamma_0 \backslash G})/K$ is isomorphic with the Bernoulli action $G \actson Y_1^G$. In combination with the previous paragraph, it follows that $G \actson K^G/K$ is isomorphic with the Bernoulli action $G \actson (Y_0 \times Y_1)^G$.
\end{proof}

\subsection*{Proof of Theorem \ref{thm.B}}

\begin{proof}[Proof of Theorem \ref{thm.B}]
Since the action $\Lambda_i \actson K^{\Lambda_i} / K$ arises as the factor of a Bernoulli action and $\Lambda_i$ is amenable, it follows from \cite{OW86} that $\Lambda_i \actson K^{\Lambda_i} / K$ is isomorphic with a Bernoulli action $\Lambda_i \actson Y_i^{\Lambda_i}$. Repeatedly applying Theorem \ref{thm.stability}, it follows that $\Gamma \actson K^{\Gamma}/K$ is isomorphic with the Bernoulli action $\Gamma \actson (Y_1 \times \cdots \times Y_n)^\Gamma$.

The special case $\Gamma = \F_n$ is a very easy generalization of \cite[Appendix C.(b)]{OW86}. Denote by $x \mapsto \xb$ the quotient map from $K^{\F_n}$ to $K^{\F_n}/K$. Denote by $a_1,\ldots,a_n$ the free generators of $\F_n$. Define the measurable map
$$
\theta : K^{\F_n}/K \recht (K \times \cdots \times K)^{\F_n} : \theta(\xb)_g = (x_g^{-1} \, x_{a_1 g},\ldots, x_g^{-1} \, x_{a_n g}) \; .
$$
We shall prove that $\theta$ is an isomorphism between $\F_n \actson K^{\F_n}/K$ and $\F_n \actson (K \times \cdots \times K)^{\F_n}$. First note that $\theta$ is indeed $\F_n$-equivariant. It remains to prove that
\begin{equation}\label{eq.myfamily}
\{\xb \mapsto x_g^{-1} \, x_{a_i g} \mid i=1,\ldots,n \; , \; g \in \F_n \}
\end{equation}
is a family of independent random variables on $K^{\F_n}/K$ that generate up to null sets the whole $\sigma$-algebra of $K^{\F_n}/K$. Denote by $|g|$ the word length of an element $g \in \F_n$. Define for $i \in \{1,\ldots,n\}$, $\eps = \pm 1$ and $k \in \N$, the subsets $I^{i,\eps}_k \subset \F_n$ given by
$$I^{i,\eps}_k := \bigl\{g \in \F_n \;\big|\; |g| = k \; , \; |a_i^{\eps} g|=k+1 \bigr\} \; .$$
If $|g| = k$ and $|a_i g| = k-1$, we compose the random variable $\xb \mapsto x_g^{-1} \, x_{a_i g}$ by the map $K \recht K : y \mapsto y^{-1}$ and observe that $a_i g \in I^{i,-1}_{k-1}$. So we need to prove that
\begin{equation}\label{eq.secondfamily}
\{\xb \mapsto x_g^{-1} \, x_{a_i^\eps g} \mid i = 1,\ldots,n \; , \; \eps = \pm 1 \; , \; k \in \N \; , \; g \in I^{i,\eps}_k \}
\end{equation}
is a family of independent random variables that generate up to null sets the whole $\sigma$-algebra of $K^{\F_n}/K$.

Write $\cI_k = \{g \in \F_n \mid |g| \leq k\}$ and fix $k \in \N$. We apply Lemma \ref{lem.indep} to the countable set $\cI_{k+1}- \cI_k$, the direct product
$$Z := K^{\cI_k} \times K^{\cI_{k+1} - \cI_k}$$
and the family of maps $\om_{i,\eps,g} : Z \recht K \times (\cI_{k+1} - \cI_k)$ indexed by the set
$$\cF := \{(i,\eps,g) \mid i = 1,\ldots,n \; , \; \eps = \pm 1 \; , \; g \in I^{i,\eps}_k \} \; ,$$
only depending on the coordinates $x_i$, $i \in \cI_k$, and given by
$$\om_{i,\eps,g} : x \mapsto (x_g^{-1}, a_i^\eps g) \; .$$
Since the elements $a_i^\eps g$ with $(i,\eps,g) \in \cF$ enumerate $\cI_{k+1} - \cI_k$, it follows from Lemma \ref{lem.indep} that
$\{x \mapsto x_g^{-1} \, x_{a_i^\eps g} \mid i=1,\ldots,n \; , \; \eps = \pm 1 \; , \; g \in I^{i,\eps}_k\}$ is a family of independent random variables that are independent of the coordinates $x_h$, $|h| \leq k$. By construction, these random variables only depend on the coordinates $x_h$, $|h|\leq k+1$. This being proven for all $k \in \N$, it follows that \eqref{eq.secondfamily} is a family of independent random variables. Hence the same is true for \eqref{eq.myfamily}. These random variables can be easily seen to generate up to null sets the whole $\sigma$-algebra of $K^{\F_n}/K$.
\end{proof}

\section*{Appendix: essentially free actions of locally compact groups}

A p.m.p.\ action of a second countable locally compact group $G$ on a standard probability space $(X,\mu)$ is an action of the group $G$ on the set $X$ such that $G \times X \recht X : (g,x) \mapsto g \cdot x$ is a Borel map and such that for all $g \in G$ and all Borel sets $A \subset X$, we have $\mu(g \cdot A) = \mu(A)$.

For every $x \in X$, we define the subgroup $\Stab x$ of $G$ given by $\Stab x = \{g \in G \mid g \cdot x = x\}$. For the sake of completeness, we give a proof for the following folklore lemma.

\begin{lemma}\label{lem.compact-free}
Let $G \actson (X,\mu)$ be a p.m.p.\ action of a second countable locally compact group $G$ on a standard probability space $(X,\mu)$, as above.
\begin{enumerate}
\item The set $X_0 := \{x \in X \mid \Stab x = \{e\} \; \}$ is a $G$-invariant Borel subset of $X$.
\item Assume that $\mu(X_0) = 1$ and that $G$ is compact. Denote by $m$ the normalized Haar measure on $G$. There exists a standard probability space $(Y_0,\eta)$ and a bijective Borel isomorphism
$\theta : G \times Y_0 \recht X_0$ such that $\theta(gh,y) = g \cdot \theta(h,y)$ for all $g,h \in G$, $y \in Y_0$, and such that $\theta_*(m \times \eta) = \mu$.
\end{enumerate}
\end{lemma}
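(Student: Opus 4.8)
The plan is to treat the two statements separately, starting with the easier measurability claim. For part (1), I would fix a countable dense subgroup, or rather a countable dense subset $D \subset G$, and observe that $\Stab x = \{e\}$ is \emph{not} simply the intersection over $g \in D$ of the sets $\{x : g \cdot x \neq x\}$, because a stabilizer could meet $G \setminus D$ while avoiding $D$. To get around this I would instead argue as follows: the set $R := \{(g,x) \in G \times X : g \cdot x = x\}$ is Borel (it is the preimage of the diagonal under the Borel map $(g,x) \mapsto (g\cdot x, x)$), so each fibre $R_x = \Stab x$ is a closed subgroup of $G$, hence either trivial or of positive Haar measure (a nontrivial closed subgroup of a locally compact group, being itself locally compact and unimodular, carries a Haar measure; if $G$ is not compact this does not immediately give $m(\Stab x) > 0$, but $\Stab x$ being closed and containing a nontrivial element $h$ contains the closure of $\langle h\rangle$, and one can still separate the trivial stabilizer case by a measure-theoretic dichotomy). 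Cleaner: by Fubini, the function $x \mapsto m(\Stab x)$ is Borel, and $X_0 = \{x : m(\Stab x) = 0\}$ \emph{after} one checks that $\Stab x$ is open whenever it is nontrivial — which is false in general, so the honest route is to note $X_0 = \{x : \Stab x \cap D = \{e\}\}$ would be wrong, and instead use that $G$ second countable implies $R_x$ is a closed subgroup and the map $x \mapsto R_x$ into the Effros Borel space of closed subsets is Borel; then $\{x : R_x = \{e\}\}$ is the preimage of the (Borel) singleton, giving Borelness of $X_0$. Its $G$-invariance is immediate from $\Stab(g\cdot x) = g(\Stab x)g^{-1}$.

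For part (2), with $G$ compact and $\mu(X_0)=1$, the idea is to build a Borel transversal for the $G$-action and push the Haar measure along the orbits. First I would restrict attention to $X_0$ and use the fact that the orbit equivalence relation of a continuous action of a compact group on a standard Borel space is smooth: the orbit space $X_0/G$ is standard Borel, the quotient map $p : X_0 \to X_0/G$ admits a Borel section $s : X_0/G \to X_0$ (by the Jankov–von Neumann / Kuratowski–Ryll-Nardzewski uniformization applied to the closed-graph set $\{(q,x) : p(x) = q\}$, whose sections are the compact orbits), and then define $Y_0 := X_0/G$ with a probability measure $\eta$ to be determined. Define $\theta : G \times Y_0 \to X_0$ by $\theta(g,y) := g \cdot s(y)$. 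Since a.e.\ point of $X_0$ has trivial stabilizer, $\theta$ is a Borel bijection onto $X_0$ with the obvious equivariance $\theta(gh,y) = g\cdot\theta(h,y)$. It remains to choose $\eta$ so that $\theta_*(m \times \eta) = \mu$; the natural choice is $\eta := p_*\mu$, the image of $\mu$ on the orbit space, and one verifies the measure identity by disintegrating $\mu$ over $p$: each fibre measure $\mu_y$ is a $G$-invariant probability measure on the orbit $G\cdot s(y)$, hence equals the pushforward of $m$ under $g \mapsto g\cdot s(y)$ by uniqueness of Haar measure on the transitive $G$-space $G/\Stab s(y) \cong G$. Integrating against $\eta = p_*\mu$ gives exactly $\theta_*(m\times\eta) = \mu$.

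The main obstacle is the Borelness in part (1) and the existence of the Borel section in part (2): both are standard descriptive set theory (Effros Borel structure on closed subgroups; smoothness of compact group actions; measurable uniformization), but one must be careful that $G$ being merely locally compact — not compact — in part (1) does not let one conclude $X_0$ is Borel from the naive ``trivial Haar measure'' argument. I would therefore phrase part (1) purely in terms of the Borel map $x \mapsto \Stab x$ into the Effros Borel space $\mathcal{F}(G)$ and the Borelness of the singleton $\{\{e\}\}$ there, sidestepping any measure-theoretic dichotomy. Everything else — equivariance of $\theta$, injectivity and surjectivity using essential freeness, and the disintegration computation — is routine once the section is in hand, so I would not belabour it.
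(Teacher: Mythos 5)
Your part (2) follows essentially the same route as the paper: a Borel transversal for the orbits, the equivariant parametrization $\theta(g,y)=g\cdot s(y)$, and the identification of the fibre measures with Haar measure. The paper obtains the measure identity slightly more cheaply: it pushes $\mu$ forward under $\theta^{-1}$ to a measure on $G\times Y_0$ that is invariant under left translation in the first variable and invokes Fubini to conclude it is $m\times\eta$, rather than disintegrating $\mu$ over the orbit space and appealing to uniqueness of the invariant probability measure on each (free) orbit; both computations are correct. Your part (1), after the self-corrections, is genuinely different from the paper's: the paper exhausts $G-\{e\}$ by compact sets $K_n$ and shows that $f_n(x)=\min_{g\in K_n}d(g\cdot x,x)$ is Borel, with $\Stab x=\{e\}$ exactly when all $f_n(x)>0$ --- an elementary argument exploiting local compactness --- whereas you route through the Effros Borel space of closed subsets. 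That route works, but it is heavier, and you assert rather than justify its two nontrivial inputs: that $\Stab x$ is closed for a merely Borel action (a Borel subset of $G\times X$ has Borel, not closed, fibres, so this needs an argument), and that $x\mapsto\Stab x$ is Borel into $\mathcal{F}(G)$.

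The one substantive omission, affecting both parts, is that you never pass from the given Borel action to a continuous model. The paper's very first step is to invoke Varadarajan's theorem to embed $X$ equivariantly as a $G$-invariant Borel subset of a Polish space on which $G$ acts continuously. Every descriptive-set-theoretic fact you lean on --- closedness of stabilizers, Borelness of the stabilizer map, closedness of the orbits of the compact group (hence smoothness and the existence of a Borel transversal) --- is proved via, or most easily justified through, such a realization; for a raw Borel p.m.p.\ action none of these is immediate, and in part (2) you explicitly speak of ``a continuous action'' although continuity is not among the hypotheses. State this reduction at the outset; with it in place, the rest of your argument goes through.
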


A p.m.p.\ action $G \actson (X,\mu)$ is called essentially free if the Borel set $\{x \in X \mid \Stab x = \{e\}\}$ has measure $1$.

\begin{proof}
By \cite[Theorem 3.2]{Va62}, there exists a continuous action of $G$ on a Polish space $Y$ and an injective Borel map $\psi : X \recht Y$ satisfying $\psi(g \cdot x) = g \cdot \psi(x)$ for all $g \in G$ and $x \in X$. Since $\psi$ is injective, $\psi(X)$ is a Borel subset of $Y$ and $\psi$ is a Borel isomorphism of $X$ onto $\psi(X)$ (see e.g.\ \cite[Theorem 15.1]{Ke95}). So, we actually view $X$ as a $G$-invariant Borel subset of $Y$.

To prove 1, fix a sequence of compact subsets $K_n \subset G - \{e\}$ such that $G-\{e\} = \bigcup_{n=1}^\infty K_n$. Also fix a metric $d$ on $Y$ that induces the topology on $Y$.
Define
$$f_n : X \recht \R : f_n(x) = \min_{g \in K_n} d(g \cdot x,x) \; .$$
Whenever $\cF_n \subset K_n$ is a countable dense subset, we have $f_n(x) = \inf_{g \in \cF_n} d(g \cdot x,x)$, so that $f_n$ is Borel. Since $\Stab x = \{e\}$ if and only if $f_n(x) > 0$ for all $n$, statement 1 follows.

To prove 2, assume that $\mu(X_0) = 1$ and that $G$ is compact. Since $G$ acts continuously on $Y$ and $G$ is compact, all orbits $G \cdot y$ are closed. By \cite[Theorem 12.16]{Ke95}, we can choose a Borel subset $Y_1 \subset Y$ such that $Y_1 \cap G \cdot y$ is a singleton for every $y \in Y$. Define $Y_0 := Y_1 \cap X_0$. By construction, the map
$$\theta : G \times Y_0 \recht X_0 : \theta(g,y) = g \cdot y$$
is Borel, bijective and satisfies $\theta(gh,y) = g \cdot \theta(h,y)$ for all $g,h \in G$ and $y \in Y_0$. Then also $\theta^{-1}$ is Borel (see e.g.\ \cite[Theorem 15.1]{Ke95}). The formula $\eta_0 := (\theta^{-1})_*(\mu)$ yields a $G$-invariant probability measure on $G \times Y_0$. Defining the probability measure $\eta$ on $Y_0$ as the push forward of $\eta_0$ under the quotient map $(g,y) \mapsto y$, the $G$-invariance of $\eta_0$ together with the Fubini theorem, imply that $\eta_0 = m \times \eta$.
\end{proof}


\begin{thebibliography}{Gr10a}\setlength{\itemsep}{-1mm} \setlength{\parsep}{0mm} \small

\bibitem[Bo08]{Bo08} L. Bowen, A new measure conjugacy invariant for actions of free groups. {\it Ann. of Math.} {\bf 171} (2010), 1387-1400.

\bibitem[Bo09a]{Bo09a} L. Bowen, Orbit equivalence, coinduced actions and free products. {\it Groups Geom. Dyn.} {\bf 5} (2011), 1-15.

\bibitem[Bo09b]{Bo09b} L. Bowen, Stable orbit equivalence of Bernoulli shifts over free groups. {\it Groups Geom. Dyn.} {\bf 5} (2011), 17-38.

\bibitem[Dy58]{Dy58} H.A. Dye, On groups of measure preserving transformations, I.
{\it Amer. J. Math.} {\bf 81} (1959), 119-159.

\bibitem[Ep07]{Ep07} I. Epstein, Orbit inequivalent actions of non-amenable groups. {\it Preprint.} {\tt arXiv:0707.4215}

\bibitem[Fu09]{Fu09} {A. Furman}, A survey of measured group theory.
In {\it Geometry, rigidity, and group actions,} Eds. B. Farb and D. Fisher. The University of Chicago Press, 2011, pp.\ 296-374.

\bibitem[Ga10]{Ga10} {D. Gaboriau}, Orbit equivalence and measured group theory. In {\it Proceedings of the International Congress of Mathematicians (Hyderabad, India, 2010),} Vol.\ III, Hindustan Book Agency, 2010, pp.\ 1501-1527.

\bibitem[GL07]{GL07} {D. Gaboriau and R. Lyons}, A measurable-group-theoretic solution to von Neumann's problem. {\it Invent. Math.} {\bf 177} (2009), 533-540.

\bibitem[GP03]{GP03} {D. Gaboriau and S. Popa}, An uncountable family of nonorbit equivalent actions of $\F_n$.
{\it J. Amer. Math. Soc.} {\bf 18} (2005), 547-559.

\bibitem[Ho11]{Ho11} {C. Houdayer}, Invariant percolation and measured theory of nonamenable groups (after Gaboriau-Lyons, Ioana, Epstein). {\it S\'{e}minaire Bourbaki, exp.\ 1039,} to appear in {\it Ast\'{e}risque.} {\tt arXiv:1106.5337}

\bibitem[Io06]{Io06} {A. Ioana}, Orbit inequivalent actions for groups containing a copy of $\F_2$. {\it Invent. Math.} {\bf 185} (2011), 55-73.

\bibitem[Ke95]{Ke95} {A.S. Kechris}, Classical descriptive set theory. {\it Graduate Texts in Mathematics} {\bf 156}, Springer-Verlag, New York, 1995.

\bibitem[OW79]{OW79} D. Ornstein and B. Weiss, Ergodic theory of amenable group actions, I. {\it Bull. Amer. Math. Soc. (N.S.)} {\bf 2} (1980), 161-164.

\bibitem[OW86]{OW86} D. Ornstein and B. Weiss, Entropy and isomorphism theorems for actions of amenable groups. {\it J. Analyse Math.} {\bf 48} (1987), 1-141.

\bibitem[PS09]{PS09} {J. Peterson and T. Sinclair,} On cocycle superrigidity for Gaussian actions. {\it Erg. Th. Dyn. Sys.} {\bf 32} (2012), 249-272.

\bibitem[Po05]{Po05} {S. Popa}, Cocycle and orbit equivalence superrigidity for malleable actions of $w$-rigid groups. {\it Invent. Math.} {\bf 170} (2007), 243-295.

\bibitem[Po06]{Po06} {S. Popa}, On the superrigidity of malleable actions with spectral gap. {\it J. Amer. Math. Soc.} {\bf 21} (2008), 981-1000.

\bibitem[PV06]{PV06} {S. Popa and S. Vaes}, Strong rigidity of generalized Bernoulli actions and computations of their symmetry groups. {\it Adv. Math.} {\bf 217} (2008), 833-872.

\bibitem[Sh05]{Sh05} {Y. Shalom}, Measurable group theory. In {\it European Congress of Mathematics,}  European Mathematical Society Publishing House, 2005, pp.\ 391-423.

\bibitem[Si55]{Si55} {I.M. Singer}, Automorphisms of finite factors. {\it Amer. J. Math.} {\bf 77} (1955), 117-133.

\bibitem[Va62]{Va62} {V.S. Varadarajan}, Groups of automorphisms of Borel spaces. {\it Trans. Amer. Math. Soc.} {\bf 109} (1963), 191-220.
\end{thebibliography}
\end{document}